%% file: non_finite_open_modular_functors.tex
\documentclass[10pt,a4paper]{scrartcl}

\usepackage[british]{babel}

\usepackage[a4paper,top=3cm,bottom=3cm,left=3cm,right=3cm]{geometry}

\usepackage{graphicx} 
\usepackage{amsmath}
\usepackage{amssymb}
\usepackage{amsthm}
\usepackage{stmaryrd}
\usepackage{mathrsfs}

\makeatletter
\DeclareRobustCommand{\em}{%
	\@nomath\em \if b\expandafter\@car\f@series\@nil
	\normalfont \else \slshape \fi}
\makeatother

\usepackage{needspace}

\usepackage[utf8]{inputenc}
\usepackage{color}
\usepackage{enumitem}
\usepackage[affil-it]{authblk}
\usepackage{graphicx} 

\usepackage{tikz-cd}
\tikzstyle{tikzfig}=[baseline=-0.25em,scale=0.5]
\let\to\undefined\newcommand{\to}{\longrightarrow}
\let\mapsto\undefined\newcommand{\mapsto}{\longmapsto}
\pgfkeys{/tikz/tikzit fill/.initial=0}
\pgfkeys{/tikz/tikzit draw/.initial=0}
\pgfkeys{/tikz/tikzit shape/.initial=0}
\pgfkeys{/tikz/tikzit category/.initial=0}

\pgfdeclarelayer{edgelayer}
\pgfdeclarelayer{nodelayer}
\pgfsetlayers{background,edgelayer,nodelayer,main}

\tikzstyle{none}=[inner sep=0mm]

\newcommand{\tikzfig}[1]{%
	{\tikzstyle{every picture}=[tikzfig]
		\IfFileExists{#1.tikz}
		{\input{#1.tikz}}
		{%
			\IfFileExists{./figures/#1.tikz}
			{\input{./figures/#1.tikz}}
			{\tikz[baseline=-0.5em]{\node[draw=red,font=\color{red},fill=red!10!white] {\textit{#1}};}}%
	}}%
}

\tikzstyle{every loop}=[]

\usepackage{tikzit}

\input{styles.tikzstyles}
\usetikzlibrary{decorations.pathreplacing,decorations.markings}
\tikzset{
	on each segment/.style={
		decorate,
		decoration={
			show path construction,
			moveto code={},
			lineto code={
				\path [#1]
				(\tikzinputsegmentfirst) -- (\tikzinputsegmentlast);
			},
			curveto code={
				\path [#1] (\tikzinputsegmentfirst)
				.. controls
				(\tikzinputsegmentsupporta) and (\tikzinputsegmentsupportb)
				..
				(\tikzinputsegmentlast);
			},
			closepath code={
				\path [#1]
				(\tikzinputsegmentfirst) -- (\tikzinputsegmentlast);
			},
		},
	},
	mid arrow/.style={postaction={decorate,decoration={
				markings,
				mark=at position .7 with {\arrow[#1]{stealth}}
	}}},
}
\tikzset{%
	link/.style    = { white, double = black, line width = 1.8pt,
		double distance = 0.4pt },
	channel/.style = { white, double = black, line width = 0.8pt,
		double distance = 0.8pt },
}

\tikzset{%
	blink/.style    = { white, double = blue, line width = 2pt,
		double distance = 1pt },
	channel/.style = { white, double = blue, line width = 2pt,
		double distance = 1pt },
}

\tikzstyle{tikzfig}=[baseline=-0.25em,scale=0.5]

\pgfkeys{/tikz/tikzit fill/.initial=0}
\pgfkeys{/tikz/tikzit draw/.initial=0}
\pgfkeys{/tikz/tikzit shape/.initial=0}
\pgfkeys{/tikz/tikzit category/.initial=0}

\pgfdeclarelayer{edgelayer}
\pgfdeclarelayer{nodelayer}
\pgfsetlayers{background,edgelayer,nodelayer,main}

\tikzstyle{none}=[inner sep=0mm]

\tikzstyle{every loop}=[]

\newtheoremstyle{dtheorem}
{\topsep}
{\topsep}
{\slshape}
{0pt}
{\bfseries}
{.}
{ }
{\thmname{#1}\thmnumber{ #2}\thmnote{ {\normalfont\slshape(#3)}}}

\newtheoremstyle{ddefinition}
{\topsep}
{\topsep}
{\normalfont}
{0pt}
{\bfseries}
{.}
{ }
{\thmname{#1}\thmnumber{ #2}\thmnote{ {\normalfont\slshape(#3)}}}

\theoremstyle{dtheorem}
\newtheorem{theorem}{Theorem}[section]

\makeatletter
\newtheorem*{rep@theorem}{\rep@title}
\newcommand{\newreptheorem}[2]{%
	\newenvironment{rep#1}[1]{%
		\def\rep@title{#2 \ref{##1}}%
		\begin{rep@theorem}}%
		{\end{rep@theorem}}}
\makeatother

\newreptheorem{theorem}{Theorem}
\newreptheorem{corollary}{Corollary}
\newtheorem{lemma}[theorem]{Lemma}
\newtheorem{proposition}[theorem]{Proposition}
\newtheorem{corollary}[theorem]{Corollary}

\theoremstyle{ddefinition}

\newtheorem{exxample}[theorem]{Example}

\numberwithin{equation}{section}

\newcommand{\cat}{\mathcal{A}}
\newcommand{\Hom}{\mathsf{Hom}_{\mathcal{A}}}

\usepackage[hidelinks]{hyperref}
\usepackage{mathtools}
\mathtoolsset{showonlyrefs}

\setkomafont{section}{\rmfamily\large}
\setkomafont{subsection}{\normalfont\slshape}
\setkomafont{subsubsection}{\rmfamily}
\setkomafont{subparagraph}{\normalfont\scshape}

\newtheorem*{theorem*}{Theorem}
\newtheorem*{corollary*}{Corollary}

\usepackage{titlesec}
\titleformat{\subsection}[runin]
{\normalfont\slshape}
{\thesubsection}
{0.5em}
{}
[.]

\usepackage{titletoc}
\dottedcontents{section}[1.5em]{\rmfamily}{1.5em}{0.2cm}
\dottedcontents{subsection}[5em]{\rmfamily}{2.3em}{0.2cm}

\makeatletter
\renewcommand\section{\@startsection {section}{1}{\z@}%
	{-3.5ex \@plus -1ex \@minus -.2ex}%
	{2.3ex \@plus.2ex}%
	{\normalfont\scshape\centering}}
\makeatother

\begin{document}
	\vspace*{-1.5cm}	\begin{center}	\textbf{\large{Open modular functors from non-finite tensor categories}}\\	\vspace{1cm}{\large Deniz Yeral }\\ 	\vspace{5mm}{\slshape  Université Bourgogne Europe\\ CNRS\\ IMB UMR 5584\\ F-21000 Dijon\\ France}\end{center}	\vspace{0.3cm}
	\begin{abstract}\noindent 
		We show that a compact rigid balanced braided monoidal category with enough compact projective objects gives rise to a system of mapping class group representations compatible with the gluing along marked intervals. A motivation to consider non-finite tensor categories is the theory of vertex operator algebras where such categories arise as categories of modules. The mapping class group representations presented in this article admit a factorization homology description. In other words, they are of three-dimensional origin and hence obey a holographic principle. A compact projective symmetric Frobenius algebra endows the representations with a pointing that is mapping class group invariant and compatible with the gluing along intervals. This shows that, at least to some extent, many of the tools for the construction and study of spaces of conformal blocks and correlators remain available in a non-finite, but rigid setting.
	\end{abstract}
	
	\tableofcontents
	
	\section{Introduction and summary}
	The monodromy data of a conformal field theory can be described by the notion of a \emph{modular functor} \cite{Seg,MS,Tur,Til,BK}. An open-closed modular functor assigns vector spaces that are called \emph{spaces of conformal blocks} to compact oriented surfaces with parametrized intervals and circles in their boundary. The spaces of conformal blocks are endowed with an action of the mapping class group, i.e.\ the group of isotopy classes of diffeomorphisms preserving the boundary parametrizations and the orientation. These vector spaces are compatible with the gluing along intervals and circles, see e.g.\ \cite{MSWY}. An \emph{open} modular functor is the restriction to the open sector, in other words, we only allow surfaces with marked intervals. This notion is studied in \cite{MW6} as a modular algebra in the sense of \cite{GK2} over a modular operad of surfaces with marked intervals, see also \cite{LAZAROIU2001497,Moore:2006dw,Costello:2004ei} for more background on open-closed field theories.
	
	In this note, we define a class of open modular functors in a non-finite setting. We also show that this open modular functor can be given in terms of factorization homology. 
	
	More precisely, let $\cat$ be a \emph{framed $E_2$-algebra} in $\mathsf{Pr}$, where $\mathsf{Pr}$ denotes the symmetric monoidal bicategory of $k$-linear presentable categories, cocontinuous functors and linear natural isomorphisms. This means that $\cat$ is equipped with a cocontinuous monoidal product $\otimes: \cat \boxtimes \cat \to \cat$ together with a braiding and a balancing that are subject to coherence conditions. If furthermore $\mathcal{A}$ has enough compact projective objects such that its compact objects admit duals and if the duality is compatible with the balancing, then we show in Proposition \ref{prop} that $\cat$ is a $\mathsf{Pr}$-valued \emph{cyclic} framed $E_2$-algebra. 
	
	In particular, $\cat$ carries a structure of a cyclic \emph{associative} algebra. By \cite{MW6}, the cyclic associative algebra $\cat$ uniquely extends to an open modular functor that will be denoted by $\cat_{\text{\normalfont \bfseries !}}$. This means that, for any connected compact surface $\Sigma$ with at least one boundary component and with $n$ intervals in its boundary colored with objects in $X_i \in \cat$, we obtain a mapping class group  representation $\cat_{\text{\normalfont \bfseries !}}(\Sigma;X_1,\dots,X_n)$, see Figure~\ref{figure1}.

	\begin{figure}[h]
	\begin{align}\begin{array}{c}	\begin{tikzpicture}[scale=0.5]
				\begin{pgfonlayer}{nodelayer}
					\node [style=none] (0) at (-13, 3) {};
					\node [style=none] (1) at (-13, 1) {};
					\node [style=none] (2) at (-13, -1) {};
					\node [style=none] (3) at (-13, -3) {};
					\node [style=none] (4) at (-4, 1) {};
					\node [style=none] (5) at (-4, -1) {};
					\node [style=none] (6) at (-2, 0) {$\mapsto$};
					\node [style=none] (7) at (2.5, 0) {$\cat_{\text{\normalfont \bfseries !}}(\Sigma;X_1,X_2,X_3)$};
					\node [style=none] (9) at (8.5, 0) {$\mathsf{Map}(\Sigma)$};
					\node [style=none] (13) at (-13.5, 1.5) {};
					\node [style=none] (14) at (-12.5, 1.5) {};
					\node [style=none] (15) at (-12.5, 2.5) {};
					\node [style=none] (16) at (-13.5, 2.5) {};
					\node [style=none] (17) at (-8.75, 0) {};
					\node [style=none] (18) at (-8, 0) {};
					\node [style=none] (19) at (-9.25, 0.5) {};
					\node [style=none] (20) at (-7.5, 0.5) {};
					\node [style=none] (21) at (-14.25, 2) {$X_1$};
					\node [style=none] (22) at (-11.75, 1.75) {$X_2$};
					\node [style=none] (23) at (-14.5, -2) {$X_3$};
					\node [style=none] (24) at (-7.75, -2.25) {$\Sigma$};
					\node [style=none] (25) at (6.5, 0) {$\curvearrowleft$};
				\end{pgfonlayer}
				\begin{pgfonlayer}{edgelayer}
					\draw [style=black, bend left=15, looseness=0.50] (3.center) to (5.center);
					\draw [style=black, bend left=75] (2.center) to (3.center);
					\draw [style=black, bend left=75, looseness=2.25] (1.center) to (2.center);
					\draw [style=black, bend left=45] (17.center) to (18.center);
					\draw [style=black, bend right=60, looseness=1.25] (19.center) to (20.center);
					\draw [style=black, in=180, out=-15, looseness=0.25] (0.center) to (4.center);
					\draw [style=black, bend right=75, looseness=0.75] (4.center) to (5.center);
					\draw [style=black, bend left=90, looseness=0.75] (4.center) to (5.center);
					\draw [style=black, in=165, out=-75, looseness=0.75] (13.center) to (1.center);
					\draw [style=black, in=-180, out=75] (16.center) to (0.center);
					\draw [style=black, in=105, out=0] (0.center) to (15.center);
					\draw [style=black, in=-105, out=15, looseness=0.75] (1.center) to (14.center);
					\draw [style=thickblue, in=105, out=-105] (16.center) to (13.center);
					\draw [style=thickblue, in=75, out=-75, looseness=0.75] (15.center) to (14.center);
					\draw [style=thickblue, bend right=90] (2.center) to (3.center);
				\end{pgfonlayer}
			\end{tikzpicture}
		\end{array}
	\end{align}
	\caption{The value of $\cat_{\text{\normalfont \bfseries !}}$ on an open surface with three colored marked intervals.}
	\label{figure1}
	\end{figure}
	This open modular functor admits a description in terms of \emph{factorization homology}  in the following sense: Given a framed $E_2$-algebra $\cat$ in $\mathsf{Pr}$, factorization homology of surfaces~\cite{AF,BZBJ} assigns to an oriented surface $\Sigma$ a category $\int_{\Sigma} \mathcal{A} \in \mathsf{Pr}$ in a way that satisfies excision. Moreover:
	\begin{itemize}
		\item If $\Sigma$ has $n$ marked intervals in its boundary, then $\int_{\Sigma} \mathcal{A}$ becomes a module category over $\cat^{\boxtimes n}$. The action $\rhd:\cat^{\boxtimes n} \boxtimes \int_{\Sigma} \mathcal{A}  \to \int_{\Sigma} \mathcal{A} $ is given by embedding of disks into a neighborhood of the marked intervals and is equivariant with respect to the action of the mapping class group $\mathsf{Map}(\Sigma)$ on $\int_{\Sigma}\cat$.
		
		\item The canonical embedding $\emptyset \rightarrow \Sigma$ endows the category $\int_{\Sigma} \mathcal{A}$ with a pointing  $\mathcal{O}_{\Sigma} \in \int_{\Sigma} \mathcal{A}$ which is a $\mathsf{Map}(\Sigma)$-homotopy fixed point.
	\end{itemize}
	These two facts together give a $\mathsf{Map}(\Sigma)$-action on the vector space $\mathsf{Hom}_{\int_{\Sigma}\cat}(\mathcal{O}_\Sigma,X \rhd \mathcal{O}_\Sigma)$ for every $X \in \mathcal{A}^{\boxtimes n}$.
	If $\mathcal{A}$ is moreover compact rigid and has enough compact projective objects, this action is used to define an open modular functor:
	\begin{reptheorem}{thmain}
		Let $\cat$ be a framed $E_2$-algebra in $\mathsf{Pr}$ that has enough compact projective objects, that is compact rigid with a compact monoidal unit and whose balancing is compatible with duals. Let $\Sigma$ be a surface with at least one boundary component per connected component and $n \geq 1$ marked intervals in its boundary, so that each connected component of $\Sigma$ contains at least one marked interval. Let $\mathsf{FH}_{\cat}(\Sigma;-): \cat^{\boxtimes n} \to \mathsf{Vect}$ be the unique cocontinuous functor that is given by 
		\begin{align}
			X \mapsto \mathsf{Hom}_{\int_{\Sigma}\cat}(\mathcal{O}_{\Sigma},X\rhd \mathcal{O}_{\Sigma})
		\end{align} on compact projective objects $X \in \mathsf{cp}(\cat^{\boxtimes n})$. Then, the assignment $\Sigma \mapsto \mathsf{FH}_{\cat}(\Sigma;-)$ defines a $\mathsf{Pr}$-valued open modular functor.
	\end{reptheorem}
	By Corollary \ref{cor} this open modular functor coincides with $\cat_{\text{\normalfont \bfseries !}}$ and we have mapping class group equivariant isomorphisms 
	\begin{align}
		\cat_{\text{\normalfont \bfseries !}}(\Sigma;X) \cong \mathsf{Hom}_{\int_{\Sigma}\cat}(\mathcal{O}_{\Sigma},X \rhd \mathcal{O}_{\Sigma}) \quad \text{for}\quad  X \in \mathsf{cp}(\cat^{\boxtimes n})\ .
	\end{align}
	
	These results apply to non-finite categories originating from quantum groups (Example~\ref{examplehopf}) and vertex operator algebras (Example~\ref{examplevoa}), e.g.\ the $\beta\gamma$ ghosts~\cite{AW}. We also mention that by the results of \cite{BSZ}, the open modular functors constructed in this note admit an extension to a partially defined open-closed modular functor whose value on the circle is $\int_{\mathbb{S}^1}\cat$.
	
	\subsection*{Holographic principle} 
	Factorization homology admits a description in terms of skein categories \cite{Cooke,BH}. In particular, the vector space $\mathsf{Hom}_{\int_{\Sigma}\cat}(\mathcal{O}_{\Sigma},X\rhd \mathcal{O}_{\Sigma})$ can be seen as the admissible skein module, see e.g.\ \cite{CGP}, of $\Sigma \times [0,1]$ with suitable boundary labels. Similarly to \cite[Section 6]{Woi3}, we interpret this as an instance of \emph{holographic principle}, which means that the spaces of conformal blocks of $\cat_{\text{\normalfont \bfseries !}}$,  which are assigned to \emph{two-dimensional} manifolds, can be seen as \emph{three-dimensional} quantities.
	
	\subsection*{Open correlators} A modular functor describes the monodromy data of a \emph{chiral} conformal field theory. To describe the \emph{full} conformal field theory, one needs to exhibit, after the so-called holomorphic factorization, a collection of mapping class group invariant vectors inside the spaces of conformal blocks. These vectors are called \emph{correlators}, see e.g.\ \cite{algcften} for an overview. The same notion considered for the open sector of a modular functor leads to the so-called \emph{open} correlators. For the open modular functor constructed in this note, we show that compact projective symmetric Frobenius algebras $F \in \cat$ give rise to a system of open correlators, i.e.\ to a collection of vectors 
	\begin{align}
		\xi^{F}_{\Sigma} \in \cat_{\text{\normalfont \bfseries !}}(\Sigma;F,\dots,F)
	\end{align} 
	that are fixed by the action of the mapping class group of $\Sigma$ and that respect the gluing along intervals. This generalizes the results of \cite[Section 8]{Woi} to a non-finite setting.
	\begin{repcorollary}{corr}
		A compact projective symmetric Frobenius algebra $F \in \cat$ defines a consistent system of open correlators $\xi_\Sigma^F \in \cat_{\text{\normalfont \bfseries !}}(\Sigma;F,\dots,F)$. Conversely, any such system with underlying compact projective object $F$ defines a symmetric Frobenius algebra structure on $F$.
	\end{repcorollary}

	This allows to conclude the existence of open correlators for the $\beta\gamma$ ghosts (Example~\ref{excor}).

	\subsection*{Acknowledgements} I would like to thank my advisor Lukas Woike for many helpful discussions and comments related to this project. Moreover, I would like to thank to Jorge Becerra, Adrien Brochier, Lukas Müller and Peter Schauenburg for helpful discussions or comments on the draft. This project is supported by the ANR project CPJ n°ANR-22-CPJ1-0001-01 at the Institut de Mathématiques de Bourgogne. The IMB receives support from the EIPHI Graduate School (contract ANR-17-EURE-0002).

	\section{Preliminaries}
	In this section, we give a reminder on open modular functors and factorization homology. We fix an algebraically closed field $k$. 
	
	\subsection{Open modular functors} \label{subsectionone} A \emph{cyclic} operad \cite{GK1} has a structure that allows to consistently exchange the inputs and the output of the operations. A \emph{modular} operad \cite{GK2} furthermore includes self-compositions of operations.
	An open modular functor is a modular algebra over a certain modular operad, the so-called \emph{open surface operad}.  We will use a groupoid-valued model for the open surface operad $\mathsf{O}$. We refer to \cite{MW1,MW6} for the precise definitions and details.
	
	The groupoid of operations $\mathsf{O}(n)$ of total arity $n+1\geq 0$ consists of connected surfaces $\Sigma$ (always compact and oriented) with at least one boundary component and $n+1$ parametrized intervals on the boundary. Here a parametrization means an orientation-preserving embedding $[0,1]^{\sqcup (n+1)} \hookrightarrow \partial \Sigma$. The morphisms of $\mathsf{O}(n)$ are isotopy classes of orientation-preserving diffeomorphisms that also preserve boundary parametrizations. In particular, the automorphism group of $\Sigma$ is the mapping class group that will be denoted by $\mathsf{Map}(\Sigma)$. The operadic composition is given by gluing surfaces along the boundary intervals.
	
	The algebras that we consider in this note take values in $\mathsf{Pr}$, the bicategory of presentable $k$-linear  categories, cocontinuous functors and linear natural isomorphisms, see e.g.\ \cite{Brandenburg2014ReflexivityAD,BJS, BJSS} for more background. The Kelly-Deligne tensor product $\boxtimes$ endows $\mathsf{Pr}$ with a symmetric monoidal structure, the monoidal unit is the category of vector spaces $\mathsf{Vect}$. We refer to \cite{SP} for the precise definition of a symmetric monoidal bicategory.

	Cyclic and modular algebras are defined via a self-duality structure on the underlying category. Let $\cat \in \mathsf{Pr}$ be equipped with a cocontinuous pairing $\kappa: \cat \boxtimes \cat \to \mathsf{Vect}$ that is
	\begin{itemize}
		\item non-degenerate, i.e.\ there exists a copairing $\Delta: \mathsf{Vect} \to \cat \boxtimes \cat$ such that the snake relations hold up to isomorphism,
		\item symmetric, i.e.\ $\kappa$ is a $\mathbb{Z}_2$-homotopy fixed point with respect to the action of the symmetric braiding of $\mathsf{Pr}$.
	\end{itemize}
	An \emph{open modular functor} with label category $\cat$ amounts to a collection of cocontionuous functors $F(\Sigma;-):\cat^{\boxtimes n} \to \mathsf{Vect}$ together with a $\mathsf{Map}(\Sigma)$-action, where $\Sigma$ is a surface with $n$ marked intervals on its boundary. Furthermore, these functors respect the gluing in the following way: Given a surface $\Sigma'$ obtained by gluing $\Sigma$ along two intervals on $\partial \Sigma$, we have a $\mathsf{Map}(\Sigma)$-equivariant  natural isomorphism
	\begin{align}
		F(\Sigma';X_1,\dots,X_n) \cong F(\Sigma;X_1,\dots,\Delta',\dots,\Delta'',\dots,X_n) \quad \text{for}\quad  X_i \in \cat \ , 
	\end{align}
	where $\Delta = \Delta' \boxtimes \Delta''$ is inserted in the slots belonging to the intervals that we glue together. This is called \emph{excision}. Note that we use Sweedler notation for the copairing $\Delta$; it is a priori not a pure tensor.
	
	\subsection{Factorization homology}
	Inspired by \cite{BeDr}, factorization homology is introduced in \cite{Lur,AF} as a homology theory for $n$-dimensional (topological) manifolds that takes an $E_n$-algebra as input. The relevant case in this note is the factorization homology of two-dimensional manifolds, we refer to \cite{BZBJ}.
	
	Recall that a \emph{framed $E_2$-algebra} in $\mathsf{Pr}$ is a balanced braided monoidal category $\cat$ in $\mathsf{Pr}$, i.e.\ $\cat$ is equipped with
	\begin{itemize}
		\item a cocontinuous monoidal product $\otimes:\cat \boxtimes \cat \to \cat$ with a monoidal unit $I$, together with the usual associativity and unit isomorphisms that satisfy coherence conditions,
		\item a braiding, i.e.\ a natural isomorphism $c_{X,Y}:X\otimes Y\cong Y\otimes X$ that satisfies the usual hexagon relations,
		\item a balancing, i.e.\ a natural isomorphism $\theta_{X}:X\cong X$ such that 
		\begin{align} 
		\theta_{X\otimes Y} &= c_{Y,X}c_{X,Y}(\theta_X \otimes \theta_Y) \ , \\ \theta_{I}&=\text{id}_I \ . 
		\end{align}
	\end{itemize}
	Given a framed $E_2$-algebra $\cat$ and a surface $\Sigma$, one can define the factorization homology as
	\begin{align}
		\int_{\Sigma}\mathcal{A} := \underset{\substack{\varphi:\mathbb{D}^{\sqcup n}\hookrightarrow \Sigma \\ n \geq 0}}{\operatorname{hocolim}}\mathcal{A}^{\boxtimes n} \in \mathsf{Pr} \ ,
	\end{align}
	where $\mathbb{D}$ denotes the two-dimensional disk and the homotopy colimit runs over all oriented embeddings $\varphi:\mathbb{D}^{\sqcup n}\hookrightarrow \Sigma$. This assignment is functorial, meaning that for an oriented embedding $f:\Sigma \hookrightarrow \Sigma'$ we have a functor $f_*: \int_{\Sigma}\cat \to \int_{\Sigma'}\cat$ and the composition is respected up to isomorphism. In particular, the canonical embedding $\emptyset \hookrightarrow \Sigma$ induces a functor $\mathsf{Vect} \to \int_{\Sigma}\cat$ which can be identified with an object in $\mathcal{O}_{\Sigma} \in \int_{\Sigma}\cat$. In this way, the category $\int_{\Sigma}\cat$ is pointed for each surface $\Sigma$. The object $\mathcal{O}_{\Sigma}$ is called \emph{quantum structure sheaf} in \cite{BZBJ}.

	\subsection{Cyclic framed $E_2$-algebras}\label{twopointthree}
	The framed $E_2$-operad has a cyclic structure \cite{Wahl,SalWahl,Bud}. If $\cat$ is moreover a \emph{cyclic} framed $E_2$-algebra, then one can define an open modular functor in the following way: A cyclic framed $E_2$-algebra is in particular a cyclic \emph{associative} algebra. By the results in \cite{MW6}, which builds on \cite{Cos,Gia,MW1,MW2}, a cyclic associative algebra in any symmetric monoidal bicategory can be \emph{uniquely} extended to an open modular functor	that is denoted by $\cat_{\text{\normalfont \bfseries !}}$. This establishes an equivalence between cyclic associative algebras and open modular functors.
	
	In the following section, we will give sufficient conditions for a framed $E_2$-algebra $\cat$ to extend to a cyclic framed $E_2$-algebra. This will be based on a \emph{rigid} duality. Recall that an object $X$ in a monoidal category $\cat$ is called (left) dualizable if there exists an object $X^\vee$ together with maps 
	\begin{align}
		&\text{ev}_{X}:X^\vee \otimes X \to I\ , \\ &\text{coev}_{X}:I \to X \otimes X^\vee\ ,
	\end{align} 
	such that the zigzag identities are satisfied. This induces the adjunctions 
	\begin{align}\label{dualiso}
		\Hom(- \otimes X,-) &\cong \Hom(-, - \otimes X^\vee) \ , \\ \Hom(X^\vee \otimes -,-) &\cong \Hom(-, X \otimes -) \ , 
	\end{align}
	see e.g.\ \cite{EGNO,Walton}. The notion of a right dual is defined analogously. An object is called \emph{dualizable} if it admits left and right duals.

\needspace{10\baselineskip}
	\section{Factorization homology construction of non-finite open modular functors}
	In this section, we introduce a class of cyclic framed $E_2$-algebras in $\mathsf{Pr}$ and describe its associated
	open modular functor via factorization homology.
		
		\subsection{Cyclic framed $E_2$-algebras from non-finite tensor categories}
		Let $\cat\in \mathsf{Pr}$. Recall that an object $X\in \cat$ is called
		\begin{itemize}
		\item \emph{compact} if $\Hom(X,-)$ preserves filtered colimits,
		\item \emph{compact projective} if $\Hom(X,-)$ preserves arbitrary colimits.
		\end{itemize}
		We say that $\cat$ \emph{has enough compact projective objects} if every object of $\cat$ is a colimit of compact projective objects and that $\cat$ is \emph{compact rigid} if its a monoidal category whose compact objects are dualizable. In the following, we will denote by $\mathsf{c}(\cat)$ the full subcategory of compact objects, and by $\mathsf{cp}(\cat)$, the full subcategory of compact projective objects of $\cat$.
		
	    The following fact is possibly well known, but we state and prove it here in lack of a reference.
	\begin{lemma} \label{lemma}
		Suppose that $\cat \in \mathsf{Pr}$ has enough compact projective objects and is compact rigid with a compact monoidal unit. Then, the left (and right) dual of a compact projective object is also compact projective.
	\end{lemma}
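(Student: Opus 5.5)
Let $P \in \mathsf{cp}(\cat)$ and let $P^\vee$ be its left dual, which exists because $P$ is in particular compact and $\cat$ is compact rigid. The plan is to show that $\Hom(P^\vee,-)$ preserves all colimits by rewriting it, via the duality adjunctions \eqref{dualiso}, as a composite of colimit-preserving functors.

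First, apply the second isomorphism in \eqref{dualiso} with the roles arranged so that $P^\vee$ sits in the first slot:
\begin{align}
	\Hom(P^\vee, Y) \cong \Hom(P^\vee \otimes I, Y) \cong \Hom(I, P\otimes Y)\ .
\end{align}
This is natural in $Y$. The functor $P \otimes - : \cat \to \cat$ is cocontinuous, because the monoidal product is cocontinuous (it is a morphism in $\mathsf{Pr}$ out of $\cat\boxtimes\cat$, and hence cocontinuous in each variable separately). It therefore suffices to show that $\Hom(I,-)$ preserves all colimits, i.e.\ that the unit $I$ is compact projective. By hypothesis we only know that $I$ is compact.

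This is the main obstacle: upgrading compactness of $I$ to projectivity. I would use the assumption that $\cat$ has enough compact projectives. Write $I$ as a colimit of compact projective objects. Since $I$ is compact, it is a retract of a finite colimit of compact projectives, and in fact of a finite one which one can take to be a cokernel of a map between finite direct sums. A cleaner route is to use the duality directly. The identity $\mathrm{id}_P$ factors through the zigzag $P \to P\otimes P^\vee \otimes P \to P$, so $P$ is a retract of $P \otimes P^\vee \otimes P$. More usefully, $I$ is a retract of $P^\vee \otimes P$ whenever $\mathrm{ev}_P$ is split epi, which need not hold. So instead I would argue as follows. The functor $\Hom(I,-) \cong \Hom(P^\vee, P^\vee\otimes -)$, up to precomposition with things, is not obviously helpful.

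The better approach is to prove projectivity of $P^\vee$ using $P$ rather than $I$. For this we want a natural isomorphism
\begin{align}
	\Hom(P^\vee, Y) \cong \Hom(P^\vee\otimes P\,\text{-stuff})\ ,
\end{align}
so we rewrite it via the right dual. Since $P$ is compact, $P^\vee$ is compact and dualizable. By rigidity we have ${}^\vee(P^\vee)\cong P$, where ${}^\vee(-)$ denotes the right dual. The first adjunction of \eqref{dualiso}, written for right duals, then gives
\begin{align}
	\Hom(P^\vee, Y) \cong \Hom(I \otimes P^\vee, Y)\cong \Hom(I, Y\otimes P)\ .
\end{align}
This reduces to the same question about $\Hom(I,-)$ again. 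So everything comes down to showing that $I$ is projective, i.e.\ that $\Hom(I,-)$ is right exact (it already preserves filtered colimits and coproducts).

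For this I would show that $I$ is a retract of some $P\otimes P^\vee$ with $P$ compact projective. Choose an epimorphism $\bigoplus_j Q_j \to I$ from compact projectives; by compactness of $I$, a finite sum $Q$ suffices to have an epimorphism $Q\to I$. Dualizing, we get a monomorphism $I \to Q^\vee$; the first isomorphism of \eqref{dualiso} identifies maps $I\to Q\otimes Q^\vee$ with endomorphisms of $Q$. Then I would try to show that $\Hom(I,-)\cong\Hom(Q,Q\otimes -)$ restricted appropriately, and that $\Hom(Q, Q\otimes -)$ is exact since $Q$ is projective and $Q\otimes-$ is exact (tensoring with a dualizable object is exact, having both adjoints). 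Concretely, $\Hom(I,Y)$ is then a natural retract of $\Hom(Q,Q\otimes Y)$ via precomposition with the epi $Q\to I$ and a splitting built from $\mathrm{coev}$. A retract of a colimit-preserving functor preserves colimits. Checking that this retraction exists, i.e.\ that $\Hom(I,Y)\to\Hom(Q,Y)\to\Hom(Q,Q\otimes Q^\vee\otimes Y)$ splits naturally, is the step I expect to require the real work. Once $I$ is compact projective, $P^\vee$ is too, and the argument for right duals is symmetric.
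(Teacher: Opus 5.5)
Your first reduction, $\Hom(P^\vee,-)\cong\Hom(I,P\otimes -)$, is correct and is also where the paper starts. The step you then commit to cannot be carried out, because the statement it aims at is false under the hypotheses of the lemma. You want to show that $\Hom(I,-)$ is cocontinuous, i.e.\ that $I$ is compact projective.

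If $I$ were compact projective, then for every compact $C$ the functor $\Hom(C,-)\cong\Hom(I\otimes C,-)\cong\Hom(I,-\otimes C^\vee)$ would be cocontinuous, so every compact object would be projective. This fails in exactly the non-semisimple situations the paper is designed for. Take $\cat=H\text{-}\mathsf{Mod}$ with $H$ a finite-dimensional non-semisimple Hopf algebra, or $\mathsf{ind}\,\mathcal{C}$ for a non-semisimple $\mathcal{C}$ as in Example~\ref{exampleabelian} (e.g.\ the $\beta\gamma$ ghosts). All hypotheses hold, yet the unit is not projective. In particular, the natural splitting of $\Hom(I,-)\to\Hom(Q,Q\otimes Q^\vee\otimes -)$ that you expect to be ``the real work'' does not exist in general. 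Exhibiting $\Hom(I,-)$ as a natural retract of a cocontinuous functor would force $\Hom(I,-)$ itself to be cocontinuous.

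The missing idea is that you never need exactness of $\Hom(I,-)$ on all of $\cat$, only after applying $P\otimes -$. There it comes for free, because $P\otimes C$ is compact projective for every compact (hence dualizable) $C$, since $\Hom(P\otimes C,-)\cong\Hom(P,-\otimes C^\vee)$. The paper uses this as follows. $\Hom(I,P\otimes -)$ preserves filtered colimits since $I$ is compact, so by compact generation it suffices to check coequalizers $C\rightrightarrows C'\to C''$ of compact objects. Their images $P\otimes C\rightrightarrows P\otimes C'\to P\otimes C''$ form a coequalizer diagram of compact projectives. On such a diagram $\Hom(I,-)$ agrees with the cocontinuous extension of $\Hom(I,-)|_{\mathsf{cp}(\cat)}$, so the coequalizer is preserved.

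Alternatively, the retract idea you considered and discarded does work if you apply it to $P^\vee$ rather than to $P$ or $I$. The zigzag identity $(\text{ev}_P\otimes\text{id}_{P^\vee})\circ(\text{id}_{P^\vee}\otimes\text{coev}_P)=\text{id}_{P^\vee}$ exhibits $P^\vee$ as a retract of $P^\vee\otimes P\otimes P^\vee$. By \eqref{dualiso}, $\Hom(P^\vee\otimes P\otimes P^\vee,-)\cong\Hom(P,P\otimes -\otimes P^{\vee\vee})$, which is cocontinuous. Here $P^{\vee\vee}$ exists because $P^\vee$ is compact, which follows from your first reduction and compactness of $I$.
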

	\begin{proof}
		Let $P \in \cat$ be compact projective and $P^\vee$ be its left dual. We will show that $P^\vee$ is compact projective, i.e.\ $\Hom(P^\vee, -)$ is cocontinuous. The proof for the right dual is analogous.
		\begin{itemize}
			\item By \eqref{dualiso}, we have $\Hom(P^\vee,-) \cong \Hom(I, P \otimes -).$ The monoidal unit $I$ is compact and the monoidal product $\otimes$ is cocontinuous, hence the functor $\Hom(P^\vee, -)$ preserves filtered colimits.
			\item The category $\cat$ has enough compact projective objects. In particular, it is compactly generated, i.e.\ it is the ind-completion of $\mathsf{c}(\cat)$. Consequently, the functor $\Hom(P^\vee, -)$ is cocontinuous if its restriction to $\mathsf{c}(\cat)$ preserves finite colimits. As it is additive, it preserves finite coproducts, therefore it is sufficient to prove that $\Hom(P^\vee, -) \cong \Hom(I, P \otimes -)$  preserves coequalizers. Let $C \rightrightarrows C' \rightarrow C''$ be a coequalizer in $\mathsf{c}(\cat)$. Then, $P \otimes C \rightrightarrows P \otimes C' \rightarrow P \otimes C''$ is a coequalizer in $\cat$, because $\mathsf{c}(\cat) \to \cat$ preserves finite colimits and $\otimes$ is cocontinuous. Remark that each term of this coequalizer is compact projective, this follows from \eqref{dualiso}. The category $\cat$ has enough compact projective objects and the functor $\Hom(I,-):\mathsf{cp}(\cat)\to\mathsf{Vect}$ can be extended to a cocontinuous functor $F:\cat \to \mathsf{Vect}$. In particular, $F$ preserves coequalizers. Since $F=\Hom(I,-)$ on compact projective objects, this means that $\Hom(I,P\otimes C) \rightrightarrows \Hom(I,P \otimes C') \rightarrow \Hom(I,P\otimes C'')$ is a coequalizer in $\mathsf{Vect}$. This shows that $\Hom(P^\vee, -)$ preserves coequalizers in $\mathsf{c}(\cat)$.
		\end{itemize}
	\end{proof}
	 The next proposition shows that if $\cat$ is furthermore equipped with a braiding and a balancing that is compatible with duals, i.e.\ $\theta_{X}^\vee = \theta_{X^\vee}$ for $X$ dualizable, then $\cat$ is a cyclic framed $E_2$-algebra in $\mathsf{Pr}$.

	\begin{proposition} \label{prop}
		Let $\cat$ be a framed $E_2$-algebra in $\mathsf{Pr}$ that has enough compact projective objects, that is compact rigid with a compact monoidal unit and whose balancing is compatible with duals. Then $\mathcal{A}$ carries a \emph{cyclic} framed $E_2$-algebra structure in $\mathsf{Pr}$.
	\end{proposition}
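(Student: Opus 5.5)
The strategy is to reduce cyclicity to a non-degenerate symmetric pairing on $\cat$ that is compatible with the monoidal product, and then invoke the known characterization of cyclic framed $E_2$-algebras in terms of such pairings. Following the approach of \cite{MW2} for the finite case, a cyclic framed $E_2$-algebra structure on a balanced braided $\cat$ in $\mathsf{Pr}$ amounts to three things: a non-degenerate symmetric pairing $\kappa:\cat\boxtimes\cat\to\mathsf{Vect}$, an isomorphism $\kappa(X\otimes Y,Z)\cong\kappa(X,Y\otimes Z)$ satisfying the usual coherences, and the condition that the balancing is compatible with $\kappa$, namely $\kappa(\theta_X\boxtimes \mathrm{id})=\kappa(\mathrm{id}\boxtimes\theta_Y)$.

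\textbf{Step 1: the pairing.} Since $\cat$ has enough compact projectives, $\cat\simeq$ the free cocompletion of $\mathsf{cp}(\cat)$, and $\cat\boxtimes\cat$ is generated by $\mathsf{cp}(\cat)\otimes\mathsf{cp}(\cat)$. We therefore define $\kappa$ as the unique cocontinuous functor with
\[\kappa(P,Q):=\Hom(P^\vee,Q)\cong \Hom(I,P\otimes Q)\]
on compact projectives. By Lemma~\ref{lemma}, $P^\vee$ is compact projective, so $\kappa(P,-)$ is cocontinuous. In the same way, the copairing $\Delta$ is determined as the coend $\int^{P\in\mathsf{cp}(\cat)} P\boxtimes P^\vee$, with a choice of right or left dual matching the convention. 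The snake relations reduce to the Yoneda/coend identity $\int^P \Hom(Q^\vee,P)\otimes P^\vee\cong Q^\vee$ composed with $Q^{\vee\vee}\cong Q$. This last isomorphism is where the balancing is used: a pivotal structure is obtained from the balancing and braiding via the Drinfeld isomorphism, and compatibility $\theta^\vee=\theta$ gives a coherent pivotal structure. Lemma~\ref{lemma} applied to both duals ensures that duality restricts to an anti-equivalence of $\mathsf{cp}(\cat)$.

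\textbf{Step 2: symmetry and invariance.} Symmetry $\Hom(I,P\otimes Q)\cong\Hom(I,Q\otimes P)$ is implemented by the braiding twisted by the balancing; equivalently, via $\Hom(P^\vee,Q)\cong\Hom(Q^\vee,P^{\vee\vee})\cong\Hom(Q^\vee,P)$ using the pivotal structure. The $\mathbb Z_2$-fixed point condition (applying the symmetry twice gives the identity) is precisely the statement that the pivotal structure coming from the ribbon structure is spherical enough, which follows from $\theta_{X^\vee}=\theta_X^\vee$. Invariance $\Hom(I,(P\otimes Q)\otimes R)\cong\Hom(I,P\otimes(Q\otimes R))$ is just the associator, with coherence inherited. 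Compatibility with $\theta$ follows from naturality of $\theta$ together with $\theta_I=\mathrm{id}$ and $\theta_{P\otimes Q}=c^2(\theta\otimes\theta)$. All of these are checked on compact projectives and then extend uniquely by cocontinuity.

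\textbf{Main obstacle.} The hard part is the coherent symmetry of $\kappa$ as a genuine homotopy fixed point, and the matching of the cyclic framed $E_2$ coherence data, which is rotation acting as the balancing. I would first establish that the ribbon (balanced, rigid on compacts, $\theta^\vee=\theta$) structure on $\mathsf{c}(\cat)$ yields a pivotal structure for which $\mathsf{cp}(\cat)$ is closed under duals. I would then appeal to the finite-rigid argument of \cite{MW2} verbatim on the small category $\mathsf{cp}(\cat)$, since all its formulas are expressed through Hom spaces out of compact projectives, and extend to $\cat$ by the universal property of the free cocompletion.
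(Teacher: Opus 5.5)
\textbf{Verdict: genuine gap.} Your overall plan matches the paper's: define $\kappa$ on $\mathsf{cp}(\cat)$ by $\Hom(I,P\otimes Q)\cong\Hom(P^\vee,Q)$, extend cocontinuously, use a coend as copairing (with Lemma~\ref{lemma}), get $\kappa\cong\kappa(I,-\otimes-)$ from the unitor and associator, and conclude via the characterization of cyclic framed $E_2$-algebras as self-dual balanced braided algebras. That characterization is \cite[Definition 5.4, Theorem 5.13]{MW1}, not \cite{MW2}.

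The gap is in the one step where the hypothesis $\theta_{X^\vee}=\theta_X^\vee$ is actually needed. You assert that compatibility of $\kappa$ with the balancing (condition (PB)) follows from naturality of $\theta$, $\theta_I=\mathrm{id}$ and $\theta_{P\otimes Q}=c_{Q,P}c_{P,Q}(\theta_P\otimes\theta_Q)$. Those identities hold in every balanced braided category, but (PB) does not. Take $\cat=\mathsf{Vect}_{\mathbb Z}$ with trivial braiding and $\theta=q^n$ in degree $n$, with $q^2\neq 1$. Every hypothesis of the proposition except $\theta_{X^\vee}=\theta_X^\vee$ holds. Yet on $\kappa(k_1,k_{-1})\cong k$ (with $k_n$ the one-dimensional object in degree $n$), $\theta\otimes\mathrm{id}$ acts by $q$ and $\mathrm{id}\otimes\theta$ acts by $q^{-1}$.

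The correct argument is the paper's. Under $\kappa(P,Q)\cong\Hom(P^\vee,Q)$, $\theta_P\otimes\mathrm{id}_Q$ acts by $f\mapsto f\circ\theta_P^\vee$. Meanwhile $\mathrm{id}_P\otimes\theta_Q$ acts by $f\mapsto\theta_Q\circ f=f\circ\theta_{P^\vee}$, using naturality. These agree because $\theta_P^\vee=\theta_{P^\vee}$. Taking $Q=P^\vee$ and $f=\mathrm{id}$ shows that (PB) is in fact equivalent to this condition on compact projectives.

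You have the roles of the ingredients swapped. The identities you cite are exactly what makes the symmetry $s(g)=c_{P,Q}(\theta_P\otimes\mathrm{id}_Q)g$ ("braiding twisted by the balancing") involutive:
$s^2(g)=c_{Q,P}c_{P,Q}(\theta_P\otimes\theta_Q)g=\theta_{P\otimes Q}\,g=g\,\theta_I=g$.
For your pivotal version $f\mapsto a_P^{-1}\circ f^\vee$, involutivity is automatic, since every pivotal structure satisfies $a_{X^\vee}=(a_X^\vee)^{-1}$; no sphericality enters. Moreover, in the formulation of \cite{MW1} used by the paper, no separate symmetry has to be verified at all. One only checks non-degeneracy, $\gamma$ with (U), and (PB).

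Some smaller points:
\begin{itemize}
\item The pivotal structure in your Step~1 is unnecessary. The Drinfeld isomorphism twisted by $\theta$ is monoidal for any balancing anyway, so the ribbon condition is not what produces it.
\item With $\Delta=\int^{Q}Q^\vee\boxtimes Q$ one gets $\kappa(P,Q^\vee)\cong\Hom(Q,P)$ and $\kappa(Q,P)\cong\Hom(Q^\vee,P)$ directly from \eqref{dualiso}. Lemma~\ref{lemma} for both duals makes $Q\mapsto Q^\vee$ an equivalence $\mathsf{cp}(\cat)^{\mathrm{op}}\simeq\mathsf{cp}(\cat)$, which lets you reindex the second coend.
\item Your co-Yoneda formula should output $Q^{\vee\vee}$, not $Q^\vee$.
\item Reading $\kappa(P\otimes Q,R)$ as $\Hom(I,P\otimes Q\otimes R)$ requires $P\otimes Q\in\mathsf{cp}(\cat)$. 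This follows from \eqref{dualiso}; the paper packages it as $\kappa\cong\varepsilon\circ\otimes$.
\end{itemize}
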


	\begin{proof}
		We will show that $\mathcal{A}$ is a self-dual balanced braided algebra in the sense of \cite[Definition 5.4]{MW1}. More precisely, we will define a non-degenerate pairing $\kappa$ on $\mathcal{A}$ together with a natural isomorphism $\gamma_{X,Y}: \kappa(X,Y) \cong \kappa(I,X \otimes Y)$ that satisfies the conditions (U) and (PB) in \cite[Definition 5.4]{MW1}. This concludes the proof because by \cite[Theorem 5.13]{MW1}, self-dual balanced braided algebras and cyclic framed $E_2$-algebras are equivalent.
		\begin{itemize}
			\item Let us define $\kappa:\mathcal{A} \boxtimes \mathcal{A} \to \mathsf{Vect}$ to be the unique cocontinuous functor that is given by 
			\begin{align}
				(P,Q) \mapsto \mathsf{Hom}_{\mathcal{A}}(I,P\otimes Q)
			\end{align} 
			on compact projective objects $P,Q \in \mathsf{cp}(\cat)$. We will prove that $\kappa$ is non-degenerate and the corresponding copairing is given by the coend \begin{align} \Delta = \int^{Q \in \mathsf{cp}(\cat)}Q^\vee \boxtimes Q \in \mathcal{A}\boxtimes \mathcal{A} \ . 
				\end{align} In other words, we will exhibit an isomorphism $\kappa(-,\Delta')\otimes \Delta'' \cong \text{id}_{\mathcal{A}}$. Indeed, both of these functors being cocontinuous, it is enough to show that 
				\begin{align} 
					\kappa(P,\Delta')\otimes \Delta'' \cong P
				\end{align} 
				for $P$ compact projective. The functor $\kappa(P,-)\otimes -$ is cocontinuous, hence we can pull the coend out and we have
			\begin{align}
				\kappa(P,\Delta')\otimes \Delta'' \cong \int^{Q \in \mathsf{cp}(\cat)}\kappa(P,Q^\vee)\otimes Q \stackrel{\text{Lemma } \ref{lemma}}{\cong}
				\int^{Q \in \mathsf{cp}(\cat)}\mathsf{Hom}_{\mathcal{A}}(I,P\otimes Q^\vee)\otimes Q \\ \stackrel{\eqref{dualiso}}{\cong} \int^{Q \in \mathsf{cp}(\cat)}\mathsf{Hom}_{\mathcal{A}}(Q,P)\otimes Q \cong P \ ,
			\end{align}
		where in the last step we used the Yoneda Lemma. The isomorphism 
		\begin{align} 
			\Delta' \otimes \kappa(\Delta'',-) \cong \text{id}_{\mathcal{A}}
		\end{align} 
		is obtained analogously.
		\item Let $\varepsilon:\mathcal{A}\to \mathsf{Vect}$ be the unique cocontinuous functor that is given by $P \mapsto \mathsf{Hom}_{\mathcal{A}}(I,P)$ on compact projective objects $P$. Then, we have 
		\begin{align} 
			\varepsilon(P\otimes Q) \cong \Hom(I,P\otimes Q)
		\end{align} 
		for $P,Q \in \mathsf{cp}(\cat)$ because $P \otimes Q$ is compact projective by \eqref{dualiso}. This means that $\kappa \cong \varepsilon \circ \otimes$ since both of these functors are cocontinuous and they agree on compact projective objects. Then, the unit isomorphisms of the monoidal product defines the desired natural isomorphism $\gamma_{X,Y}: \kappa(X,Y) \cong \kappa(I,X \otimes Y)$ and the condition (U) in \cite[Definition 5.4]{MW1} is automatically satisfied.
		\item To finish the proof, we need to show that the condition (PB) in \cite[Definition 5.4]{MW1} holds, i.e. the isomorphisms $\kappa(X,Y) \cong \kappa(X,Y)$ induced by $\theta_X \otimes \text{id}_Y$ and $\text{id}_X \otimes \theta_Y$ coincide, where $\theta$ denotes the balancing. Again, it is sufficient to prove this for compact projective objects $P,Q$. We have
		\begin{align}
			\kappa(P,Q) \cong \mathsf{Hom}_{\mathcal{A}}(I,P \otimes Q) \cong \mathsf{Hom}_{\mathcal{A}}(P^\vee,Q) \ .
		\end{align}
		The automorphism on $\mathsf{Hom}_{\mathcal{A}}(P^\vee,Q)$ that is induced by $\theta_P \otimes \text{id}_Q$ is the assignment $f \mapsto f \circ (\theta_{P})^\vee$, whereas $\text{id}_P \otimes \theta_Q$ induces $f \mapsto \theta_Q \circ f$. As the balancing is compatible with the duals, $(\theta_{P})^\vee = \theta_{P^\vee}$, and as $\theta$ is a natural isomorphism, these two assignments are the same. This concludes the point (PB).
		\end{itemize}
	\end{proof}
	
	\begin{exxample}\label{exampleabelian}
		Let $\mathcal{C}$ be a linear abelian ribbon category. In particular, $\mathcal{C}$ is finitely cocomplete hence is a framed $E_2$-algebra in $\mathsf{Rex}$, the symmetric monoidal bicategory of finitely cocomplete linear categories, right exact functors and linear natural isomorphisms. Its ind completion $\mathsf{ind}\, \mathcal{C}$ is a framed $E_2$-algebra in $\mathsf{Pr}_\mathsf{c}$, the subcategory of $\mathsf{Pr}$ that consists of compactly generated presentable linear categories, cocontinuous and compact preserving functors and linear natural isomorphisms \cite[Section~3.1]{BZBJ}. If furthermore $\mathcal{C}$ has enough projective objects, then $\mathsf{ind}\, \mathcal{C}$ has enough compact projective objects. By Proposition~\ref{prop}, $\mathsf{ind}\, \mathcal{C}$ is therefore a cyclic framed $E_2$-algebra in $\mathsf{Pr}$. As a technical subtlety, we should highlight that $\mathcal{C}$, however, is generally \emph{not} a cyclic framed $E_2$-algebra in $\mathsf{Rex}$ or $\mathsf{Pr}_\mathsf{c}$ because the copairing $\Delta$ might only exist as an ind object, see also \cite[Remark~2.11]{BJS} for a related problem. For this reason, passing to the ind completion is not an artificial step, but necessary to obtain the cyclic structure.	We will discuss more specific examples falling into this example class in Example~\ref{examplehopf} and \ref{examplevoa}.
	\end{exxample}

	\subsection{The open modular functor}
	As we recalled in Subsection \ref{twopointthree}, the cyclic framed $E_2$-algebra $\cat$ is in particular a cyclic associative algebra, which in this case is given by forgetting the balancing and the braiding. The cyclic associative algebra extends to an open modular functor that we denote by $\cat_{\text{\normalfont \bfseries !}}$ \cite{MW6}. 
	
	In this subsection, we will show that this open modular functor can be described via factorization homology. Recall that given a surface $\Sigma$ with $n$ marked intervals, inclusion of a disk $\mathbb{D} \cong [0,1] \times [0,1]$ along an interval defines an action 
	\begin{align} 
		\rhd:\cat^{\boxtimes n} \boxtimes \int_{\Sigma}\cat \to \int_{\Sigma}\cat \ , 
	\end{align} 
	see e.g.\ \cite[Section 2]{BZBJ}. This action can be used to construct an open modular functor. The following generalizes \cite[Theorem 6.1]{Woi3}:
	
	\begin{theorem} \label{thmain}
		Let $\cat$ be a framed $E_2$-algebra in $\mathsf{Pr}$ that has enough compact projective objects, that is compact rigid with a compact monoidal unit and whose balancing is compatible with duals. Let $\Sigma$ be a surface with at least one boundary component per connected component and $n \geq 1$ marked intervals in its boundary, so that each connected component of $\Sigma$ contains at least one marked interval. Let $\mathsf{FH}_{\cat}(\Sigma;-): \cat^{\boxtimes n} \to \mathsf{Vect}$ be the unique cocontinuous functor that is given by 
		\begin{align}
			X \mapsto \mathsf{Hom}_{\int_{\Sigma}\cat}(\mathcal{O}_{\Sigma},X\rhd \mathcal{O}_{\Sigma})
		\end{align} on compact projective objects $X \in \mathsf{cp}(\cat^{\boxtimes n})$. Then, the assignment $\Sigma \mapsto \mathsf{FH}_{\cat}(\Sigma;-)$ defines a $\mathsf{Pr}$-valued open modular functor.
	\end{theorem}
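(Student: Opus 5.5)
The proof will follow the strategy of \cite[Theorem 6.1]{Woi3}, replacing finiteness arguments by arguments on compact projective objects. By \cite{MW6}, an open modular functor is the same as a cyclic associative algebra. It therefore suffices to do two things. First, construct the structure of an open modular functor directly on $\Sigma \mapsto \mathsf{FH}_{\cat}(\Sigma;-)$. Second, or alternatively, identify $\mathsf{FH}_{\cat}$ with $\cat_{\text{\normalfont \bfseries !}}$ through a mapping class group equivariant comparison that is compatible with gluing.

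The plan is to verify the axioms directly, in the following order.

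\textbf{Step 1: mapping class group action.} Each $\phi \in \mathsf{Map}(\Sigma)$ acts on $\int_\Sigma\cat$ by an equivalence $\phi_*$. This equivalence is compatible with $\rhd$ up to the permutation of boundary labels; since $\phi$ preserves the parametrizations, it is compatible with $\rhd$ itself. It also fixes $\mathcal{O}_\Sigma$ up to coherent isomorphism, because $\mathcal{O}_\Sigma$ is a homotopy fixed point. Hence $\phi_*$ induces linear isomorphisms on $\mathsf{Hom}_{\int_\Sigma\cat}(\mathcal{O}_\Sigma, X\rhd\mathcal{O}_\Sigma)$. These are natural in $X \in \mathsf{cp}(\cat^{\boxtimes n})$, so they extend uniquely to the cocontinuous extension.

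\textbf{Step 2: a formula on compact projectives.} Following \cite{BZBJ}, I would use the description of $\int_\Sigma\cat$ for a surface with boundary as modules over the moment-map algebra. Choose a marked interval, which exists in each component by assumption. Then $\int_\Sigma \cat \simeq \mathsf{Mod}_{a_\Sigma}(\cat)$, where $\mathcal{O}_\Sigma$ corresponds to the free module $a_\Sigma$ and the action of the distinguished interval is by $\otimes$. This gives
\begin{align}
\mathsf{Hom}_{\int_\Sigma\cat}(\mathcal{O}_\Sigma, X\rhd \mathcal{O}_\Sigma)\cong \Hom(I, X\rhd a_\Sigma).
\end{align}
Here $a_\Sigma$ is built from coend algebras $\int^{P\in\mathsf{cp}(\cat)} P^\vee\otimes P$, possibly twisted by braidings, and the actions of the remaining intervals are given by explicit tensor products. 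Lemma \ref{lemma} makes this formula well defined in the non-finite setting. It also shows that the right-hand side is cocontinuous in $X$ when extended, since $P^\vee$ is compact projective and $I$ is compact.

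\textbf{Step 3: excision.} This is the main obstacle. Let $\Sigma'$ be obtained by gluing two intervals of $\Sigma$. I need a $\mathsf{Map}(\Sigma)$-equivariant isomorphism
\begin{align}
\mathsf{FH}_\cat(\Sigma';X)\cong \mathsf{FH}_\cat(\Sigma;X,\Delta',\Delta''),
\end{align}
where $\Delta$ is the coend copairing of Proposition \ref{prop}. Gluing intervals corresponds in factorization homology to excision: $\int_{\Sigma'}\cat$ is a relative tensor product of $\int_\Sigma \cat$ over $\cat$, acting through the two intervals. Equivalently, $\int_{\Sigma'}\cat$ is a category of $\cat$-balanced objects, and $\mathcal{O}_{\Sigma'}$ is the image of $\mathcal{O}_\Sigma$ under the coequaliser functor. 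I would compute morphisms out of $\mathcal{O}_{\Sigma'}$ through the right adjoint of this functor. The right adjoint is given by acting with $\int^{P} P^\vee \boxtimes P$ on the two glued intervals, which is exactly $\Delta$. The coend can be pulled out of $\mathsf{Hom}(\mathcal{O}_\Sigma,-)$ because, by Step 2 and Lemma \ref{lemma}, everything reduces to $\Hom(I,-)$ applied to compact projectives, where it is cocontinuous. The delicate points are two. One is that this right adjoint is cocontinuous and satisfies the projection formula, which needs the compact rigidity and the dualizability of compact projectives. The other is showing that the resulting isomorphism is equivariant and coherent, that is, compatible with iterated gluings and with the symmetry of $\kappa$. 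For coherence I would check everything on a generating set of surfaces: disks with intervals, where $\mathsf{FH}_\cat(\mathbb{D};X_1,\dots,X_n)\cong\kappa(I,X_1\otimes\cdots\otimes X_n)$ recovers the cyclic associative structure $\gamma$ of Proposition \ref{prop}.

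\textbf{Step 4: identification with $\cat_{\text{\normalfont \bfseries !}}$.} By the uniqueness in \cite{MW6}, the data above is determined by its genus zero restriction. Having matched the disk values with $\cat$, I conclude that $\mathsf{FH}_\cat$ is an open modular functor, and in fact that it is equivalent to $\cat_{\text{\normalfont \bfseries !}}$.
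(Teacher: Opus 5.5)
Your plan works, but the excision step (Step 3) follows a genuinely different route from the paper's. The paper first moves the label across, $\mathsf{Hom}(\mathcal{O}_{\Sigma'},X\rhd\mathcal{O}_{\Sigma'})\cong\mathsf{Hom}(X^\vee\rhd\mathcal{O}_{\Sigma'},\mathcal{O}_{\Sigma'})$. It then uses the Brown--Ha\"ioun equivalence of $\int_{\Sigma'}\cat$ with presheaves on the admissible skein category. Under this equivalence $X^\vee\rhd\mathcal{O}_{\Sigma'}$ is representable and $\mathcal{O}_{\Sigma'}$ is the presheaf $\mathsf{sk}_{\mathsf{cp}(\cat)}(\Sigma'\times[0,1],-,\emptyset)$. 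By Yoneda the Hom space is therefore an admissible skein module of $\Sigma'\times[0,1]$. The coend formula is then exactly the Runkel--Schweigert--Tham excision theorem for gluing $\Sigma\times[0,1]$ along two boundary squares; no moment-map algebras or right adjoints appear. You instead stay inside $\mathsf{Pr}$. You write $\int_{\Sigma'}\cat\simeq\int_\Sigma\cat\boxtimes_{\cat^e}\cat$, use that the gluing functor $\pi\colon\int_\Sigma\cat\to\int_{\Sigma'}\cat$ sends $\mathcal{O}_\Sigma$ to $\mathcal{O}_{\Sigma'}$, and use the projection formula $\pi^R\pi\cong\Delta\rhd-$. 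That formula is obtainable under the hypotheses, for three reasons. First, $\pi$ is induced by the $\cat^e$-module functor $\mathsf{act}_I\colon\cat^e\to\cat$, which preserves compact projectives. Second, cp-rigidity makes its right adjoint a cocontinuous module functor whose value on $I$ is $\int^{P}P^\vee\boxtimes P=\Delta$. Third, $\int_\Sigma\cat\boxtimes_{\cat^e}-$ preserves adjunctions. Your route avoids skein theory and ties directly to the Ben-Zvi--Brochier--Jordan module description and the reflection equation algebra. Its cost is that you must prove this projection formula yourself. The paper's route outsources that step to skein excision and gets the holographic description as a byproduct.

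One justification needs repair. You say the coend can be pulled out of $\mathsf{Hom}(\mathcal{O}_\Sigma,-)$ because everything reduces to $\Hom(I,-)$ on compact projectives, where it is cocontinuous. However, neither $\mathsf{Hom}(\mathcal{O}_\Sigma,-)$ nor $\Hom(I,-)$ commutes with colimits when the unit $I$ is not projective. Moreover, the objects involved, such as $a_\Sigma$ or $\int^P P^\vee\otimes P$, are not compact projective in general. What you actually need is the isomorphism $\mathsf{Hom}(\mathcal{O}_\Sigma,X\rhd-)\cong\mathsf{Hom}(X^\vee\rhd\mathcal{O}_\Sigma,-)$ for $X\in\mathsf{cp}(\cat^{\boxtimes n})$, together with the fact that $X^\vee\rhd\mathcal{O}_\Sigma$ is compact projective in $\int_\Sigma\cat$. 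In your module picture this object is the free $a_\Sigma$-module on a compact projective object, by Lemma~\ref{lemma}; equivalently, $\Hom(I,X\otimes-)\cong\Hom(X^\vee,-)$ is cocontinuous. This is exactly where the assumption that every component carries a marked interval enters, and it plays the role of the representable presheaf in the paper's argument. Finally, $\mathsf{FH}_\cat$ is undefined on surfaces without marked intervals, so Step 4 should be phrased as in the paper. Restrict to disks to obtain a cyclic associative algebra, take its modular extension, and use your excision isomorphism to identify that extension with $\mathsf{FH}_\cat$ wherever the latter is defined.
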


	\begin{proof}
	 For any $X \in \mathsf{cp}(\cat^{\boxtimes n})$, the object $X \rhd \mathcal{O}_{\Sigma}$ has a homotopy fixed point structure with respect to the mapping class group because the mapping class group acts on $\int_{\Sigma}\cat$ as an $\cat^{\boxtimes n}$-module map. This induces a mapping class group action on $\mathsf{Hom}_{\int_{\Sigma}\cat}(\mathcal{O}_{\Sigma},X \rhd \mathcal{O}_{\Sigma})$ and hence on $\mathsf{FH}_{\cat}(\Sigma;-)$.
	 
	 Let us show the excision for the spaces of conformal blocks: Let $\Sigma'$ be a surface with $n \geq 1$ marked interval that is obtained by gluing a surface $\Sigma$ along two intervals on $\partial \Sigma$. We will show that
	 \begin{align}
	 	\mathsf{FH}_\cat(\Sigma';X) \cong \mathsf{FH}_\cat(\Sigma;X\boxtimes \Delta) 
	 	\quad \text{for}\quad
	 	 X \in \cat^{\boxtimes n} \ , \label{excisionone}
	 \end{align}
	 where $\Delta$ is given by the coend $\int^{P \in \mathsf{cp}(\cat)}P^\vee \boxtimes P$. As $\mathsf{FH}_\cat(\Sigma;-)$ is cocontinuous, we can pull the coend out, hence \eqref{excisionone} is equivalent to 
	 \begin{align}
	 	\mathsf{FH}_\cat(\Sigma';X) \cong \int^{P \in \mathsf{cp}(\cat)}\mathsf{FH}_\cat(\Sigma;X\boxtimes P^\vee \boxtimes P ) \quad \text{for}\quad X \in \cat^{\boxtimes n} \label{excisiontwo} \ .
	 \end{align}
	 Since we have cocontinuous functors in both sides, it is enough to show \eqref{excisiontwo} on compact projective objects in $\cat^{\boxtimes n}$, which leads to
	 \begin{align}
	 	\mathsf{Hom}_{\int_{\Sigma'}\cat}(\mathcal{O}_{\Sigma'}^\cat, X \rhd \mathcal{O}_{\Sigma'}^\cat) \cong \int^{P \in \mathsf{cp}(\cat)}\mathsf{Hom}_{\int_{\Sigma}\cat}(\mathcal{O}_{\Sigma}^\cat, (X \boxtimes P^\vee \boxtimes P) \rhd \mathcal{O}_{\Sigma}^\cat) \ , 
	 \end{align}
	 where $ X\in \mathsf{cp}(\cat^{\boxtimes n})$. By the same argument as for \eqref{dualiso}, this is equivalent to
	 \begin{align}
	 	\mathsf{Hom}_{\int_{\Sigma'}\cat}(X^\vee \rhd\mathcal{O}_{\Sigma'}^\cat, \mathcal{O}_{\Sigma'}^\cat) &\cong \int^{P \in \mathsf{cp}(\cat)}\mathsf{Hom}_{\int_{\Sigma}\cat}((X^\vee \boxtimes P^{\vee \vee} \boxtimes P^\vee) \rhd \mathcal{O}_{\Sigma}^\cat,  \mathcal{O}_{\Sigma}^\cat) \\ &\cong \int^{P \in \mathsf{cp}(\cat)}\mathsf{Hom}_{\int_{\Sigma}\cat}((X^\vee \boxtimes P^\vee \boxtimes P) \rhd \mathcal{O}_{\Sigma}^\cat,  \mathcal{O}_{\Sigma}^\cat) \ . \label{excisionthree}
	 \end{align}
	 To show \eqref{excisionthree}, we will use the description of factorization homology in terms of admissible skein modules. Roughly, an object of the skein category $\mathsf{skcat}_{\mathsf{cp}(\cat)}(\Sigma)$ is an embedding of disks into $\Sigma$ that are labeled by compact projective objects of $\cat$, such that each connected component of $\Sigma$ contains at least one embedded disk. The morphism space between two such configurations $X$ and $Y$ is given by the admissible skein module of $\Sigma \times [0,1]$ with the label $X$ on the boundary component $\Sigma \times \{0\}$ and $Y$ on $\Sigma \times \{1\}$; we denote this vector space by $\mathsf{sk}_{\mathsf{cp}(\cat)}(\Sigma \times [0,1],X,Y)$. We refer to \cite{CGP,BH,RST} for the details. 
	 
	 By \cite[Corollary 3.12]{BH}, we have \begin{align} \int_{\Sigma'}\cat \simeq \mathsf{Fun}(\mathsf{skcat}_{\mathsf{cp}(\cat)}(\Sigma')^{\text{op}},\mathsf{Vect}) \ , 
	 \end{align} here the right hand side is the presheaf category on $\mathsf{skcat}_{\mathsf{cp}(\cat)}(\Sigma')$. Under this equivalence, the distinguished object $\mathcal{O}_{\Sigma'}$ is the presheaf $\mathsf{sk}_{\mathsf{cp}(\cat)}(\Sigma' \times [0,1],-,\emptyset)$, see \cite[Remark 2.10]{BH}.

	 The object $X^\vee \rhd \mathcal{O}_{\Sigma'}$ can be seen as an object in $\mathsf{skcat}_{\mathsf{cp}(\cat)}(\Sigma')$ that we will again denote by $X^\vee$. Here it is important that every connected component of $\Sigma'$ contains a marked interval. In the same way, $X^\vee \boxtimes P^\vee \boxtimes P$ can be seen as an object in $\mathsf{skcat}_{\mathsf{cp}(\cat)}(\Sigma)$ that we will again denote by $X^\vee \boxtimes P^\vee \boxtimes P$. Now by Yoneda Lemma, \eqref{excisionthree} is equivalent to 
	 \begin{align}
	 	\mathsf{sk}_{\mathsf{cp}(\cat)}(\Sigma' \times [0,1],X^\vee,\emptyset) \cong \int^{P \in \mathsf{cp}(\cat)}\mathsf{sk}_{\mathsf{cp}(\cat)}(\Sigma \times [0,1],X^\vee \boxtimes P^\vee \boxtimes P,\emptyset) \ . \label{excisionfour}
	 \end{align}
	 We will proceed by showing that \eqref{excisionfour} is a consequence of excision for admissible skein modules. First, we observe that $\Sigma' \times [0,1]$ can be obtained by gluing $\Sigma \times [0,1]$ along two copies of $[0,1] \times [0,1]$ that are given by the marked intervals. Topologically, $[0,1] \times [0,1]$ is just a disk and $\mathsf{sk}_{\mathsf{cp}(\cat)}(\mathbb{D})\simeq \mathsf{cp}(\cat)$. Hence the excision theorem \cite[Theorem 3.1]{RST} tells us that the left hand side of \eqref{excisionfour} can be written as a coend over $P \in \mathsf{cp}(\cat)$ of the skein module of $\Sigma \times [0,1]$ that has the boundary label $X$ and also $P$ and $P^\vee$ inserted on the disks that we glue to get $\Sigma' \times [0,1]$. This skein module is isomorphic (as a $\mathsf{Map}(\Sigma)$-representation) to $\mathsf{sk}_{\mathsf{cp}(\cat)}(\Sigma \times [0,1],X^\vee \boxtimes P^\vee \boxtimes P,\emptyset)$. This can be seen by choosing a bigger disk on the boundary of $\Sigma \times [0,1]$ that contains both the disk according to which $\Sigma \times [0,1]$ is glued and the disk that represents the $\cat$-action on $\Sigma$, see Figure~\ref{figure2}. Consequently, \eqref{excisionfour} holds. Remark that by \cite[Lemma 2.7]{RST} the corner structure of $\Sigma \times [0,1]$ is not relevant.
	 \begin{figure}[h]
	 \begin{align}\begin{array}{c}	\begin{tikzpicture}[scale=0.5]
	 			\begin{pgfonlayer}{nodelayer}
	 				\node [style=none] (0) at (-1.25, 3) {};
	 				\node [style=none] (1) at (-1.25, 1.5) {};
	 				\node [style=none] (2) at (-1.25, 0.5) {};
	 				\node [style=none] (3) at (-1.25, -0.5) {};
	 				\node [style=none] (4) at (-1.25, -1.5) {};
	 				\node [style=none] (5) at (-1.25, -2.5) {};
	 				\node [style=none] (6) at (-4.25, 0) {};
	 				\node [style=none] (7) at (-1.75, 2.25) {};
	 				\node [style=none] (8) at (-1.75, 0) {};
	 				\node [style=none] (9) at (-2.5, -0.75) {};
	 				\node [style=none] (10) at (-3.25, -0.75) {};
	 				\node [style=none] (11) at (-2.25, -0.5) {};
	 				\node [style=none] (12) at (-3.5, -0.5) {};
	 				\node [style=none] (26) at (-1.75, 0.25) {};
	 				\node [style=none] (27) at (-2.25, 0.25) {};
	 				\node [style=none] (28) at (-1.75, 2) {};
	 				\node [style=none] (29) at (-2.25, 1.75) {};
	 				\node [style=none] (30) at (-2, 1.75) {};
	 				\node [style=none] (31) at (-2, 0.25) {};
	 				\node [style=none] (32) at (-6, 1.75) {};
	 				\node [style=none] (33) at (-6.25, 0.5) {};
	 				\node [style=none] (34) at (-9, 1.5) {disks that represent};
	 				\node [style=none] (35) at (1.5, 2) {};
	 				\node [style=none] (36) at (1.5, 0.25) {};
	 				\node [style=none] (37) at (3.5, 3) {};
	 				\node [style=none] (38) at (1.75, 3.5) {};
	 				\node [style=none] (39) at (2.75, 3) {};
	 				\node [style=none] (40) at (3, 3.75) {disks that represent the gluing};
	 				\node [style=none] (41) at (3, -1.5) {\dots};
	 				\node [style=none] (42) at (-9, 0.75) {the $\cat$-action};
	 				\node [style=none] (43) at (-5, -2) {$\Sigma \times \{ 0 \}$};
	 				\node [style=none] (44) at (11.75, -2) {$\Sigma \times \{ 1 \} $};
	 				\node [style=none] (45) at (12, 5) {};
	 				\node [style=none] (46) at (9.75, 3) {};
	 				\node [style=none] (47) at (9.75, 1.5) {};
	 				\node [style=none] (48) at (9.75, 0.5) {};
	 				\node [style=none] (49) at (9.75, -0.5) {};
	 				\node [style=none] (50) at (9.75, -1.5) {};
	 				\node [style=none] (51) at (9.75, -2.5) {};
	 				\node [style=none] (52) at (6.75, 0) {};
	 				\node [style=none] (53) at (9.25, 2.25) {};
	 				\node [style=none] (54) at (9.25, 0) {};
	 				\node [style=none] (55) at (8.5, -0.75) {};
	 				\node [style=none] (56) at (7.75, -0.75) {};
	 				\node [style=none] (57) at (8.75, -0.5) {};
	 				\node [style=none] (58) at (7.5, -0.5) {};
	 				\node [style=none] (59) at (9.25, 0.25) {};
	 				\node [style=none] (61) at (9.25, 2) {};
	 				\node [style=none] (66) at (6.75, 0.25) {};
	 			\end{pgfonlayer}
	 			\begin{pgfonlayer}{edgelayer}
	 				\draw [bend left=75] (0.center) to (1.center);
	 				\draw [bend left=60, looseness=1.25] (2.center) to (3.center);
	 				\draw [bend right=75, looseness=1.50] (4.center) to (5.center);
	 				\draw [in=-90, out=180, looseness=0.75] (5.center) to (6.center);
	 				\draw [in=90, out=180] (0.center) to (6.center);
	 				\draw [bend right=90, looseness=2.00] (1.center) to (2.center);
	 				\draw [bend right=90, looseness=2.00] (3.center) to (4.center);
	 				\draw [style=thickblue, bend left=90, looseness=1.25] (4.center) to (5.center);
	 				\draw [style=thickblue, bend left] (1.center) to (7.center);
	 				\draw [style=thickblue, bend right=45] (2.center) to (8.center);
	 				\draw [bend left=60] (3.center) to (8.center);
	 				\draw [bend right] (0.center) to (7.center);
	 				\draw [bend left=45, looseness=1.25] (11.center) to (12.center);
	 				\draw [bend right=45, looseness=1.25] (9.center) to (10.center);
	 				\draw [bend right=90, looseness=1.50] (28.center) to (29.center);
	 				\draw [bend left=90, looseness=1.50] (28.center) to (29.center);
	 				\draw [bend right=90, looseness=1.50] (26.center) to (27.center);
	 				\draw [bend left=90, looseness=1.25] (26.center) to (27.center);
	 				\draw (32.center) to (30.center);
	 				\draw (33.center) to (31.center);
	 				\draw (39.center) to (35.center);
	 				\draw (36.center) to (37.center);
	 				\draw [bend left=75] (46.center) to (47.center);
	 				\draw [bend left=60, looseness=1.25] (48.center) to (49.center);
	 				\draw [bend right=75, looseness=1.50] (50.center) to (51.center);
	 				\draw [in=-90, out=180, looseness=0.75] (51.center) to (52.center);
	 				\draw [in=90, out=180] (46.center) to (52.center);
	 				\draw [bend right=90, looseness=2.00] (47.center) to (48.center);
	 				\draw [bend right=90, looseness=2.00] (49.center) to (50.center);
	 				\draw [style=thickblue, bend left=90, looseness=1.25] (50.center) to (51.center);
	 				\draw [style=thickblue, bend left] (47.center) to (53.center);
	 				\draw [style=thickblue, bend right=45] (48.center) to (54.center);
	 				\draw [bend left=60] (49.center) to (54.center);
	 				\draw [bend right] (46.center) to (53.center);
	 				\draw [bend left=45, looseness=1.25] (57.center) to (58.center);
	 				\draw [bend right=45, looseness=1.25] (55.center) to (56.center);
	 				\draw [style=dashed] (2.center) to (48.center);
	 				\draw [style=dashed] (54.center) to (8.center);
	 				\draw [style=dashed] (1.center) to (47.center);
	 				\draw [style=dashed] (53.center) to (7.center);
	 			\end{pgfonlayer}
	 		\end{tikzpicture}
	 	\end{array}
	 \end{align}
	 \caption{A picture of $\Sigma \times [0,1]$.}\label{figure2}
	\end{figure}
	
	 We still cannot conclude that $\mathsf{FH}_{\cat}$ is an open modular functor in the sense of Subsection \ref{subsectionone}, because it is not defined on surfaces \emph{without} marked intervals (but at least one boundary component per connected component). This is not an issue, because we can uniquely extend this construction to surfaces without intervals: First, consider the restriction of $\mathsf{FH}_{\cat}$ on disks with marked intervals. This defines a cyclic associative algebra. The value on the disk without intervals is given by inserting the unit on a disk with an interval. Then, the modular extension construction in \cite{MW1} gives an open modular functor whose value on surfaces with marked intervals is given by $\mathsf{FH}_{\cat}$; this is because $\mathsf{FH}_{\cat}$ satisfies the excision for surfaces with marked intervals. Consequently, by insertion of the unit, $\mathsf{FH}_{\cat}$ can also be defined on surfaces without marked intervals and hence is an open modular functor.
	\end{proof}

	\begin{corollary} \label{cor}
		Let $\cat$ be a framed $E_2$-algebra in $\mathsf{Pr}$ that has enough compact projective objects, that is compact rigid with a compact monoidal unit and whose balancing is compatible with duals. Then for each surface $\Sigma$ with at least one boundary component per connected component and $n \geq 1$ marked interval in its boundary, so that each connected component of $\Sigma$ contains at least one marked interval, 
		the unique open modular functor $\cat_{\text{\normalfont \bfseries !}}$ extending $\cat$ is characterized through a canonical isomorphism
		\begin{align}
			\cat_{\text{\normalfont \bfseries !}}(\Sigma;X) \cong \mathsf{Hom}_{\int_{\Sigma}\cat}(\mathcal{O}_{\Sigma},X \rhd \mathcal{O}_{\Sigma})
			\quad \text{for}\quad X \in \mathsf{cp}(\cat^{\boxtimes n}) \label{corollary}
		\end{align}
		that is mapping class group equivariant and that respects the gluing along intervals.
	\end{corollary}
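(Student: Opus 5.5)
The plan is to use the uniqueness part of the equivalence between cyclic associative algebras and open modular functors from \cite{MW6}. By Theorem~\ref{thmain}, $\mathsf{FH}_\cat$, after the unique extension to surfaces without marked intervals described at the end of its proof, is an open modular functor with label category $\cat$. By \cite{MW6}, an open modular functor is determined up to equivalence by its restriction to disks with marked intervals, and this restriction is a cyclic associative algebra. It therefore suffices to show one thing: the cyclic associative algebra underlying $\mathsf{FH}_\cat$ is equivalent to the one underlying the cyclic framed $E_2$-algebra of Proposition~\ref{prop}, i.e.\ $\cat$ with its monoidal product and the pairing $\kappa$. Uniqueness then yields an equivalence $\cat_{\text{\normalfont \bfseries !}}\simeq \mathsf{FH}_\cat$ of open modular functors. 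Evaluated at $\Sigma$ and restricted to $X\in\mathsf{cp}(\cat^{\boxtimes n})$, this equivalence is the isomorphism \eqref{corollary}. Since it is a morphism of modular algebras, it is automatically $\mathsf{Map}(\Sigma)$-equivariant and compatible with gluing.

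The comparison on disks would go as follows. Let $\mathbb{D}_{m}$ be the disk with $m$ marked intervals. Factorization homology gives $\int_{\mathbb{D}_m}\cat\simeq\cat$, with $\mathcal{O}_{\mathbb{D}_m}\cong I$. The action of $\cat^{\boxtimes m}$ through the intervals is the $m$-fold tensor product in the cyclic order of the intervals, so $X_1\boxtimes\dots\boxtimes X_m\rhd\mathcal{O}\cong X_1\otimes\dots\otimes X_m$. Hence on compact projectives
\begin{align}
\mathsf{FH}_\cat(\mathbb{D}_m;X_1,\dots,X_m)\cong\mathsf{Hom}_\cat(I,X_1\otimes\dots\otimes X_m)=\varepsilon(X_1\otimes\dots\otimes X_m)\ ,
\end{align}
with $\varepsilon$ as in the proof of Proposition~\ref{prop}. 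In particular, the two-interval disk gives $\kappa$, and the three-interval disk gives $\kappa(-\otimes-,-)$. This is exactly the data of the cyclic associative algebra structure on $\cat$, in the form of a self-dual algebra as in \cite{MW1}.

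Next I would check that the structure isomorphisms agree. On the factorization homology side these come from gluing disks, which by the excision established in Theorem~\ref{thmain} is insertion of the copairing $\Delta$. On the algebra side they come from the associator and from $\gamma$, defined via unit isomorphisms. The cyclic rotation of $\mathbb{D}_3$ acts on $\mathsf{Hom}(I,X\otimes Y\otimes Z)$ through the factorization homology action. On the algebra side it is the cyclic structure built from $\kappa\cong\varepsilon\circ\otimes$ together with the duality isomorphisms \eqref{dualiso}.

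I expect the main obstacle to be verifying that these rotation actions coincide. A rotation of a disk with intervals is realized in factorization homology by an isotopy, which can introduce braidings and twists. One must show it reduces to the plain cyclic isomorphism $\mathsf{Hom}(I,X\otimes Y)\cong\mathsf{Hom}(I,Y\otimes X)$ given by the rigid duality. I would attack this with the skein description from the proof of Theorem~\ref{thmain}. In $\mathsf{sk}_{\mathsf{cp}(\cat)}(\mathbb{D}\times[0,1],\dots)$, the rotation is a diffeomorphism of the boundary disk configuration. The resulting morphism should be read off as the composite of evaluations and coevaluations realizing the pivotal structure. Condition (PB), proved via $\theta_P^\vee=\theta_{P^\vee}$, is what guarantees that the framing correction from the rotation is consistent with the cyclic structure of Proposition~\ref{prop}.
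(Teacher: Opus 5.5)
Your proposal follows the same route as the paper. You use the uniqueness result of \cite{MW6} to reduce the comparison of $\cat_{\text{\normalfont \bfseries !}}$ and $\mathsf{FH}_\cat$ to disks with marked intervals, where both give $\mathsf{Hom}_{\cat}(I,X_1\otimes\dots\otimes X_m)$ on compact projectives, and then read off \eqref{corollary}, with its equivariance and gluing compatibility, from the resulting equivalence of open modular functors. The paper simply calls the disk comparison immediate, so your extra step of matching the structure isomorphisms (in particular the rotation action, via braiding, twist and condition (PB)) spells out a point the paper leaves implicit; it is not a different argument, even though your treatment of that step is still only a sketch.
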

	\begin{proof}
		We will show that $\mathcal{A}_{\text{\normalfont \bfseries !}}$ and $\mathsf{FH}_\cat$ are equivalent open modular functors. This is sufficient because the right hand side in \eqref{corollary} is the value of $\mathsf{FH}_\cat(\Sigma;-)$ on compact projective objects. By \cite{MW6}, it is enough to compare $\mathcal{\cat}_{!}$ and $\mathsf{FH}_\cat$ at disks with marked intervals. This is immediate, since we have
		\begin{align}
			\mathsf{Hom}_{\int_{\mathbb{D}}\cat}(\mathcal{O}_{\mathbb{D}},(X_1 \boxtimes \dots \boxtimes X_n) \rhd \mathcal{O}_{\mathbb{D}}) \cong \Hom(I,X_1 \otimes \dots \otimes X_n) \quad \text{for}\quad X_i \in \mathsf{cp}(\cat) \ .
		\end{align}
	\end{proof}
	
	\begin{exxample}
		The cylinder $\mathbb{S}^1 \times [0,1]$ without marked intervals can be obtained by gluing a disk along two marked intervals. Then, the excision for $\cat_{\text{\normalfont \bfseries !}}$ tells us
		\begin{align}
			\cat_{\text{\normalfont \bfseries !}}(\mathbb{S}^1 \times [0,1]) \cong 
			 \kappa(\Delta)\cong \int^{P \in \mathsf{cp}(\mathcal{A})} \Hom(I,P^\vee \otimes P)\cong \int^{P \in \mathsf{cp}(\mathcal{A})} \Hom(P, P) \ . 
			\end{align}
			The action of the mapping class group $\mathsf{Map}(\mathbb{S}^1 \times [0,1])\cong \mathbb{Z}_2$ is induced by the map sending a morphism $f:P\to P$ to its dual. 
			If $\mathcal{A}$ is semisimple and has a set of simple objects indexed by $J$, this vector space is isomorphic to $|J|$ copies
			of the ground field \cite[Remark~4.19]{BZBJ}.
			Therefore, the mapping class group representations in Corollary~\ref{cor} are generally infinite-dimensional.
		\end{exxample}

\begin{exxample}[Examples from Hopf algebras]\label{examplehopf}
	A class of examples of categories that satisfy the conditions in Proposition \ref{prop} comes from comodule categories. More precisely, the category of comodules of a semiperfect Hopf algebra has enough compact projective objects. This also includes the case of a cosemisimple Hopf algebra. In particular, the category of comodules of the restricted dual of $U_q(\mathfrak{g})$, where $\mathfrak{g}$ is a finite-dimensional simple Lie algebra and $q$ is not a root of unity, is a cyclic framed $E_2$-algebra in $\mathsf{Pr}$. The resulting category is semisimple with infinitely many simple
	objects and the value of the associated open modular functor on a surface of genus $g$ with one puncture is given in terms of $g$ tensor copies of the elliptic double from \cite{BJ}, see \cite[Corollary 6.11]{BZBJ}. 
\end{exxample}

\begin{exxample}[Examples from vertex operator algebras]\label{examplevoa}
	The module category $\mathscr{F}$ for the rank 1 bosonic ghost vertex operator algebra (the so-called \emph{$\beta\gamma$ ghosts}) considered in \cite{AW} is a linear abelian ribbon category with enough projective objects, but is neither rational nor finite (the $\beta\gamma$ ghosts are not $C_2$-cofinite). With Example~\ref{exampleabelian},
	we conclude that Theorem~\ref{thmain} applies to the ind completion $\mathbf{F}:=\mathsf{ind}\,\mathscr{F}$ of $\mathscr{F}$.
	In particular, we obtain mapping class group representations $\mathbf{F}_{\text{\normalfont \bfseries !}}(\Sigma)$ for all connected compact oriented surfaces with at least one boundary component, even without rationality or finiteness.
	For the annulus and the punctured torus, respectively, we find using excision
\begin{align}	\mathbf{F}_{\text{\normalfont \bfseries !}}(\mathbb{S}^1 \times [0,1]) &\cong \int^{P\in\mathsf{Proj}\, \mathscr{F}} \mathsf{Hom}_{\mathscr{F}}(P,P)=HH_0(\mathscr{F}) \ , \\
	\mathbf{F}_{\text{\normalfont \bfseries !}}(\mathbb{T}^2 \setminus  \mathbb{D}^2) &\cong  \int^{P,Q\in\mathsf{Proj}\, \mathscr{F}} \mathsf{Hom}_{\mathscr{F}}(Q\otimes P,P\otimes Q) \ ,
	\end{align}
	where $\mathsf{Proj}\, \mathscr{F}$ denotes the full subcategory of projective objects and $HH_0(\mathscr{F})$ denotes the zeroth Hochschild homology of $\mathscr{F}$.
	By Corollary~\ref{cor}, the open spaces of conformal blocks
	$\mathbf{F}_{\text{\normalfont \bfseries !}}(\Sigma)$ admit a holographic description in terms of factorization homology, or equivalently, admissible skeins. 
	The $\beta\gamma$ ghosts are not the only example of a module category of a vertex operator algebra that is linear abelian rigid with enough projective objects, see e.g.\ the list in the introduction of \cite{Creutzig:2024qvb}.
	\end{exxample}

\subsection{Skein theory and ansular functors}
	The cyclic framed $E_2$-algebra constructed in Proposition \ref{prop} extends to a system of handlebody group representations compatible with the gluing along boundary disks, i.e.\ to a so-called \emph{ansular functor} \cite{MW2}. More precisely, for any handlebody with $n$ parametrized disks on the boundary, we have a cocontinuous functor $\widehat{\cat}(H):\cat^{\boxtimes n} \to \mathsf{Vect}$, and these functors satisfy excision. The ansular functor $\widehat{\cat}$ is related to $\cat_{\text{\normalfont \bfseries !}}$ in the following sense: Let $\Sigma$ be a connected surface with $n$ boundary components and let $H$ be a handlebody with $n$ disks embedded in its boundary such that $\partial H = \Sigma$, where $\partial H$ is the surface obtained by removing the interiors of the embedded disks on the boundary. By \cite[Section 4]{BW}, we have a functor
	\begin{align}
		\Phi_{\mathcal{A}}(H):\int_{\Sigma}\cat \to \cat^{\boxtimes n}\label{phimap}
	\end{align}
	called \emph{handlebody skein module} of $H$, the terminology coincides with the usual skein-theoretic constructions in the finite rigid case as proved in \cite{MW3}. The value of $\Phi_{\mathcal{A}}(H)$ on $\mathcal{O}_{\Sigma}$ is $\widehat{\cat}(H)$, here $\widehat{\mathcal{A}}(H)$ is seen as an object in $\mathcal{A}^{\boxtimes n}$ by cyclicity. Now if we choose an interval on each boundary component of $\Sigma$ (for the resulting operation in the open surface operad, we use the same symbol), we obtain the map
	\begin{align}
		\mathsf{Add}_{\Sigma,H}: \cat_{\text{\normalfont \bfseries !}}(\Sigma;P) \cong \mathsf{Hom}_{\int_{\Sigma}\cat}(\mathcal{O}_{\Sigma},P\rhd \mathcal{O}_{\Sigma}) \xrightarrow{\Phi_{\cat}(H)\circ -} \mathsf{Hom}_{\cat^{\boxtimes n}}(\widehat{\cat}(H), P\otimes\widehat{\cat} (H))   \label{eqskeinaction}
	\end{align}
	for $P \in \mathsf{cp(\cat^{\boxtimes n})}$.
	This map is an isomorphism if $\mathcal{A}$ is the ind completion of a modular category \cite[Corollary 6.6]{Woi2}. If $\cat$ is in addition a \emph{connected} cyclic framed $E_2$-algebra in the sense of \cite[Definition 5.8]{BW}, then it extends to a modular functor. Then the arguments of \cite[Theorem 6.1]{Woi2} show that the map $\mathsf{Add}_{\Sigma,H}$ is moreover $\mathsf{Map}(\Sigma)$-equivariant.
	
	The map $\mathsf{Add}_{\Sigma,H}$ is given by the action of the skein algebra of $\Sigma$ on the handlebody skein modules in the following sense: Similarly to \cite{MW3}, the ansular functor $\widehat{\cat}$ can be described by skein modules of handlebodies. In particular, the value of $\widehat{\cat}(H)$ on a compact projective object $X \in \mathsf{cp}(\cat^{\boxtimes n})$ coincides with the skein module $\mathsf{sk}_{\mathsf{cp}(\cat)}(H;X)$ of $H$ with the boundary parametrization given by $n$ embedded disks colored by $X$.

\subsection{Evaluation on modules over the reflection equation algebra} \label{subsecthreefour}
	Let $\Sigma$ be a connected surface with $n\ge 1$ parametrized boundary components. We add an interval around the image of a fixed point on $\mathbb{S}^1$ under the parametrization. For the cyclic framed $E_2$-algebra from Proposition \ref{prop}, the functor $\cat_{\text{\normalfont \bfseries !}}(\Sigma;-):\mathcal{A}^{\boxtimes n} \to\mathsf{Vect}$ extends to a functor
	\begin{align}
		\Phi_\mathcal{A} (\Sigma) :	\left(\int_{\mathbb{S}^1 \times [0,1]} \mathcal{A} \right)^{\boxtimes n} \to \mathsf{Vect}   \label{phimap2d}
	\end{align}
	whose value on $\mathcal{O}_{\mathbb{S}^1 \times [0,1]}^{\boxtimes n}$ agrees with $\cat_{\text{\normalfont \bfseries !}}(\Sigma;I^{\boxtimes n})$, see the proof of \cite[Theorem 5.1]{MW6} (the construction is analogous to the one of \eqref{phimap}). By \cite[Theorem~5.14]{BZBJ}, the factorization homology $\int_{\mathbb{S}^1 \times [0,1]} \mathcal{A}$ is equivalent to the category of right modules in $\mathcal{A}$ over the \emph{reflection equation algebra} $\mathbb{F}=\int^{P\in\mathsf{cp}(\cat)} P^\vee \otimes P$ \cite[Definition 4.17]{BZBJ}. In other words, \eqref{phimap2d} describes the extension of $\cat_{\text{\normalfont \bfseries !}}(\Sigma;-)$ from objects in $\cat$ to modules over the reflection equation algebra such that
	\begin{align}
		\Phi_\mathcal{A}(\Sigma) (X_1 \otimes \mathbb{F},\dots,X_n\otimes \mathbb{F}) \cong \cat_{\text{\normalfont \bfseries !}}(\Sigma;  X_1 ,\dots,X_n      ) \ . 
	\end{align}
	In particular, $\cat_{\text{\normalfont \bfseries !}}(\Sigma;I^{\boxtimes n})$ is a module over $n$ tensor copies of the endomorphism algebra of $\mathcal{O}_{\mathbb{S}^1 \times [0,1]}$ or, equivalently, the endomorphism algebra of $\mathbb{F}$ as $\mathbb{F}$-module, which is the algebra  $\Hom(I,\mathbb{F})$ of invariants of the reflection equation algebra. 
 
\subsection{Connection to the Drinfeld center}

As we recalled in the previous subsection, the open modular functor $\cat_{\text{\normalfont \bfseries !}}$ constructed in this note induces a map on the factorization homology $\int_{\mathbb{S}^1 \times [0,1]}\cat$ by \cite[Proof of Theorem 5.1]{MW6}. We would like to understand the additional structure that the map $\Phi_\mathcal{A} (\Sigma)$ in \eqref{phimap2d} corresponds to, in comparison to just $\cat_{\text{\normalfont \bfseries !}}(\Sigma)$.

By \cite[Corollary 3.12]{BH}, the category $\int_{\mathbb{S}^1 \times [0,1]}\cat$ has enough compact projective objects. In particular, by \cite[Lemma 3.5]{Brandenburg2014ReflexivityAD}, $\int_{\mathbb{S}^1 \times [0,1]}\cat$ is dualizable in $\mathsf{Pr}$. Its dual, that we denote by $(\int_{\mathbb{S}^1\times [0,1]} \mathcal{A})^*$, is given by the internal hom
	\begin{align} 
	\left(\int_{\mathbb{S}^1\times [0,1]} \mathcal{A} \right)^*= \underline{\mathsf{Hom}}_{\mathsf{Pr}}\left(\int_{\mathbb{S}^1\times [0,1]}\cat,\mathsf{Vect}\right) \ .
	\end{align} 
Therefore, the map $\Phi_\mathcal{A} (\Sigma)$ is an object in the dual of $(\int_{\mathbb{S}^1\times [0,1]}\cat)^{\boxtimes n}$. In the following, we show that this amounts to an object in the Drinfeld center $\mathcal{Z}(\cat)$ of $\cat$.

The factorization homology $\int_{\mathbb{S}^1\times [0,1]}\cat$ is, by excision, the relative tensor product $\cat \boxtimes_{\cat^e}\cat$ where $\cat^e:=\cat^{\otimes \mathsf{op}}\boxtimes \cat$ and $\cat^{\otimes \mathsf{op}}$ is $\cat$ with the opposite monoidal product. Then we have
\begin{align}
	\left(\int_{\mathbb{S}^1\times [0,1]} \mathcal{A} \right)^* &\simeq \underline{\mathsf{Hom}}_{\mathsf{Pr}}\left(\int_{\mathbb{S}^1\times [0,1]}\cat,\mathsf{Vect}\right) \\
	&\simeq \underline{\mathsf{Hom}}_{\mathsf{Pr}}(\cat \boxtimes_{\cat^e}\cat,\mathsf{Vect}) \\
	&\simeq \underline{\mathsf{Hom}}_{\cat^e}(\cat, \underline{\mathsf{Hom}}_{\mathsf{Pr}}(\cat,\mathsf{Vect})) \\ &\simeq \underline{\mathsf{Hom}}_{\cat^e}(\cat, \cat^*) \ , \label{eqdualofsone}
\end{align}
here $\underline{\mathsf{Hom}}_{\cat^e}(-,-)$ denotes the left $\cat^e$-module maps. The self-duality of $\cat$ yields a map $\cat \simeq \cat^*$ that is given by $X \mapsto \kappa(X,-) \cong \kappa(-,X)$. It is easy to see that this is an $\cat^e$-module (or equivalently $\cat$-bimodule) equivalence. This implies
\begin{align}
	\left(\int_{\mathbb{S}^1\times [0,1]} \mathcal{A} \right)^* \simeq \underline{\mathsf{Hom}}_{\cat^e}(\cat, \cat)
\end{align}
and the right hand side is equivalent to the Drinfeld center $\mathcal{Z}(\cat)$ by \cite[Proposition 7.13.8]{EGNO}, see also \cite[Proposition 3.6]{BJSS}. In summary, $\Phi_{\cat}(\Sigma)$ from \eqref{phimap2d} amounts to an object in $\mathcal{Z}(\cat)^{\boxtimes n}$. In other words, the additional structure provided by $\Phi_{\cat}(\Sigma)$, in comparison to $\cat_{\text{\normalfont \bfseries !}}(\Sigma)$, can be expressed in terms of half-braidings.

\needspace{15\baselineskip}
\section{Open correlators}
Let us fix a category $\cat \in \mathsf{Pr}$ which satisfies the conditions spelled out in Proposition \ref{prop}. In this last section, we exhibit a class of open correlators for the open modular functor $\cat_{\text{\normalfont \bfseries !}}$. Recall that \emph{a system of open correlators} is a collection of vectors $\xi_{\Sigma}^F \in \cat_{\text{\normalfont \bfseries !}}(\Sigma;F,\dots,F)$ that are invariant with respect to the $\mathsf{Map}(\Sigma)$-action and that respect gluing. The notion of an open correlator is formalized in \cite[Section~8]{Woi} by extending the microcosm principle in the sense of \cite{BaDo} to cyclic and modular algebras. The microcosm principle provides a systematic way to construct operadic algebras \emph{inside} an algebra over the same operadic structure. Within this framework, an open correlator is defined as a modular $\mathsf{O}$-algebra inside $\cat_{\text{\normalfont \bfseries !}}$. 

The main result of this section is a characterization of modular $\mathsf{O}$-algebras in $\cat_{\text{\normalfont \bfseries !}}$ whose underlying object is compact projective. We will first characterize compact projective cyclic associative algebras inside $\cat$. Then we will use the fact that this can be uniquely extended to $\cat_{\text{\normalfont \bfseries !}}$. 

\subsection{Self-duality on compact projective objects}
The cyclic and modular microcosm principle is defined on a so-called \emph{self-dual object} $F \in \cat$, we refer to \cite{Woi} for the precise definition and motivation. The following is a characterization of self-duality on compact projective objects and  generalizes \cite[Lemma 7.1]{Woi}:

\begin{proposition}\label{ndsymp}
	A self-dual structure on a compact projective object $F \in \cat$ amounts to a pairing $\beta:F \otimes F \to I$ that is
	\begin{itemize}
		\item non-degenerate in the sense that the map $\psi: F \to F^\vee$ induced by $\Hom(F\otimes F,I) \cong \Hom(F,F^\vee)$ is an isomorphism,
		\item symmetric in the sense that $F \xrightarrow{\text{pivotality}} F^{\vee \vee} \xrightarrow{\psi^\vee} F^\vee$ coincides with $\psi$.
	\end{itemize}
\end{proposition}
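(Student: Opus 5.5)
In \cite{Woi}, a self-dual structure on an object $F$ of a cyclic algebra $\cat$ with pairing $\kappa$ is a vector $\beta \in \kappa(F,F)$ that is non-degenerate and symmetric. Non-degeneracy means there is a copairing vector $\Delta_F$ in the appropriate component of $\Delta$, with snake relations holding through $\kappa$ and $\Delta$. Symmetry means $\beta$ is fixed by the $\mathbb{Z}_2$-action on $\kappa(F,F)$ coming from the symmetry structure of $\kappa$. My plan is to unwind these two conditions using the explicit pairing and copairing from Proposition~\ref{prop}, exploiting that $F$ is compact projective.

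\textbf{Step 1: the pairing datum.} Since $F$ is compact projective, $F\otimes F$ is compact projective by \eqref{dualiso}, and $\kappa(F,F)\cong \Hom(I,F\otimes F)$ by construction of $\kappa$. I would, however, use the symmetric description $\kappa \cong \varepsilon\circ\otimes$ together with the duality isomorphisms $\Hom(I,F\otimes F)\cong \Hom(F^\vee,F)$. To match the statement's convention $\beta: F\otimes F\to I$, I would use the equivalent formulation via $\Hom(F\otimes F, I)\cong\Hom(F,F^\vee)$. This switch is legitimate because, by Lemma~\ref{lemma}, $F^\vee$ is again compact projective, and the non-degenerate self-duality lets us pass between vectors in $\kappa$ and morphisms into $I$. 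Concretely, a vector in $\kappa(F,F)$ corresponds to a morphism $\psi: F\to F^\vee$, equivalently to a pairing $\beta:F\otimes F\to I$.

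\textbf{Step 2: non-degeneracy.} Using the coend formula $\Delta=\int^{Q}Q^\vee\boxtimes Q$, I would show that the snake relations for $(\beta,\Delta_F)$ translate, via the Yoneda computation in the proof of Proposition~\ref{prop}, into the existence of a two-sided inverse of $\psi$. The argument runs in both directions. A copairing component is a morphism $F^\vee \to F$ (through the coend leg $Q=F$), and the two snake identities say precisely that this morphism and $\psi$ compose to identities in both orders. So non-degeneracy is equivalent to $\psi$ being an isomorphism.

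\textbf{Step 3: symmetry (the main obstacle).} The difficulty is identifying the $\mathbb{Z}_2$-action on $\kappa(F,F)$ explicitly. This action is the homotopy fixed point structure of $\kappa$ under the symmetric braiding of $\mathsf{Pr}$, and it is transported through $\kappa\cong\varepsilon\circ\otimes$ and the braiding-free identification $\Hom(I,P\otimes Q)\cong\Hom(I,Q\otimes P)$ built from duality. I would compute it on compact projectives: under $\Hom(I,P\otimes Q)\cong\Hom(P^\vee,Q)$ and $\Hom(I,Q\otimes P)\cong\Hom(Q^\vee,P)$, the swap should send $f$ to $f^\vee$ composed with the pivotal isomorphism $P\cong P^{\vee\vee}$. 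Here the pivotal structure is the one induced by the balancing and braiding, and compatibility $\theta^\vee_X=\theta_{X^\vee}$ guarantees it is monoidal. Specializing to $P=Q=F$, the fixed point condition becomes $\psi^\vee\circ(\text{pivotality})=\psi$.

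The verification that the symmetry isomorphism of $\kappa$ from \cite{MW1} reduces to this dual-plus-pivotal formula is the delicate part. I would establish it first for the finite rigid case, following \cite[Lemma 7.1]{Woi}, whose argument only uses the formulas on the objects involved. I would then observe that every step there only involves compact projective objects, where our $\kappa$ agrees with the finite formula.
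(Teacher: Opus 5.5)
Your outline matches the paper's. You unwind the snake relations through $\Delta=\int^{Q}Q^\vee\boxtimes Q$ and co-Yoneda into $\phi\circ\psi=\mathrm{id}$ and $\psi\circ\phi=\mathrm{id}$, and you read the $\mathbb{Z}_2$-action as ``dual, then pivotal''. However, Steps~1 and~2 put the pairing and the copairing in the wrong hom-spaces, and the sentence claiming the ``switch is legitimate'' is false.

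A vector in $\kappa(F,F)\cong\Hom(I,F\otimes F)\cong\Hom(F^\vee,F)$ is a \emph{copairing} on $F$. Neither compact projectivity of $F^\vee$ nor non-degeneracy of $\kappa$ gives an isomorphism $\Hom(F^\vee,F)\cong\Hom(F,F^\vee)$; already for $\cat=\mathsf{Vect}$ these are $F\otimes F$ and $F^*\otimes F^*$. It is a morphism $F\boxtimes F\to\Delta$ that amounts to a pairing. Since $F\boxtimes F$ is compact projective,
\[
\mathsf{Hom}_{\cat\boxtimes\cat}(F\boxtimes F,\Delta)\cong\int^{Q\in\mathsf{cp}(\cat)}\Hom(F,Q^\vee)\otimes\Hom(F,Q)\cong\Hom(F,F^\vee)\cong\Hom(F\otimes F,I) \ .
\]
So your claim in Step~2 that a copairing component is a map $F^\vee\to F$ is also reversed. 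The two reversals cancel, which is why you reach the correct statement, but the intermediate identifications as written are wrong.

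The repair is to use the definition the paper takes from \cite[Definition 4.5]{Woi}. The self-dual structure is $\gamma\colon F\boxtimes F\to\Delta$, which by the display is exactly $\psi$ (equivalently $\beta$). Non-degeneracy asks for a witness $\delta\in\kappa(F,F)$, i.e.\ $\phi\colon F^\vee\to F$. With these labels your Step~2 goes through unchanged. If you keep your convention with the datum in $\kappa(F,F)$, you must say that the datum is the copairing $\phi$, and that $\beta$ is the pairing corresponding to $\psi:=\phi^{-1}$, which only exists once non-degeneracy is known.

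Likewise, your Step~3 computes the $\mathbb{Z}_2$-action on $\Hom(F^\vee,F)$, i.e.\ on $\phi$, not on $\psi$. The resulting condition $\phi=\mathrm{piv}_F^{-1}\circ\phi^\vee$ is equivalent to $\psi=\psi^\vee\circ\mathrm{piv}_F$ because $(\phi^{-1})^\vee=(\phi^\vee)^{-1}$. State this transfer explicitly rather than doing it silently. The paper asserts the identification of the action as ``dual, then pivotal'' at the same level of detail, so your appeal to \cite[Lemma 7.1]{Woi} is acceptable.

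One small correction there: the pivotal structure built from $\theta$ and the Drinfeld morphism is monoidal by the balancing axiom alone. The condition $\theta_X^\vee=\theta_{X^\vee}$ is what was needed for (PB) in Proposition~\ref{prop}.
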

\begin{proof}
	A self-dual structure on $F$ is a morphism $\gamma:F \boxtimes F \to \Delta$ that is subject to symmetry and non-degeneracy conditions \cite[Definition 4.5]{Woi}. In particular, it gives rise to a map $\psi:F \to F^\vee$ and a pairing $\beta: F \otimes F \to I$ via
	\begin{align}
		\mathsf{Hom}_{\cat \boxtimes \cat}(F \boxtimes F, \Delta) \cong \Hom(F,\Delta') \otimes \Hom(F,\Delta'') \cong \Hom(F,-)(\kappa(F^\vee,\Delta')\otimes \Delta'') \\ \cong \Hom(F,F^\vee) \cong \Hom(F\otimes F,I) \ .
	\end{align}
	Let us spell out the non-degeneracy and symmetry conditions.
	\begin{itemize}
		\item The non-degeneracy means that there exists a map $\delta:k \to \kappa(F,F)$ such that the compositions
		\begin{align}
			F \xrightarrow{\delta \otimes F} \kappa(F,F) \otimes F \xrightarrow{(\kappa \boxtimes \text{id}) (\text{id} \boxtimes \gamma)}  (\kappa \otimes \text{id}) (F \boxtimes \Delta) \cong F \label{nondegen} \ , \\
			F \xrightarrow{F \otimes \delta} F \otimes  \kappa(F,F) \xrightarrow{(\text{id} \boxtimes \kappa) (\gamma \boxtimes \text{id})}  ( \text{id} \otimes \kappa) (\Delta \boxtimes F) \cong F \ , \label{nondegen2}
		\end{align}
		are required to be the identity of $F$. 
		The map $\delta:k \to \kappa(F,F)$ defines a morphism $\phi:F^\vee \to F$ and a copairing $\delta:I \to F \otimes F$, that we again denote by $\delta$, via
		\begin{align}
			k \to \kappa(F,F) \cong \Hom(I,F \otimes F) \cong \Hom(F^\vee,F) \ .
		\end{align}
		Now, the condition in \eqref{nondegen} amounts to the map
		\begin{align}
			F \xrightarrow{\phi \otimes F} \Hom(F^\vee,F) \otimes F \xrightarrow{\Hom(F^\vee,\psi)\boxtimes F} \quad &\Hom(F^\vee,F^\vee) \otimes F \\ \to &\int^{X \in \mathsf{cp}(\cat)}\Hom(F^\vee,X^\vee) \otimes X \\ \cong &\int^{X \in \mathsf{cp}(\cat)}\Hom(X,F) \otimes X \cong F
		\end{align}
		being the identity morphism of $F$. This just means that the composition $\phi \circ \psi$ is $\text{id}_{F^\vee}$. Likewise, the condition \eqref{nondegen2} means that the composition
		\begin{align}
		 F \xrightarrow{F \otimes \phi} F \otimes \Hom(F^\vee,F)  \xrightarrow{\psi \otimes \text{id} }\quad & F^\vee \otimes \Hom(F^\vee,F)  \\ \to & \int^{X \in \mathsf{cp}(\cat)}X^\vee \otimes \Hom(X^\vee,F) \\  \cong &\int^{X \in \mathsf{cp}(\cat)}X \otimes \Hom(X,F) \cong F
		\end{align}
		
		is the identity of $F$. This just means that the composition $\psi \circ \phi$ is $\text{id}_{F}$. Consequently, the non-degeneracy of $\gamma$ is equivalent to $\psi$ being an isomorphism.
	
		\item The symmetry condition means that $\gamma$ is fixed by the $\mathbb{Z}_2$-action on $\mathsf{Hom}_{\cat \boxtimes \cat}(F \boxtimes F,\Delta)$. Equivalently, this means that $\psi$ is fixed by the map $\Hom(F,F^\vee) \cong \Hom(F^{\vee\vee},F^\vee) \cong \Hom(F,F^\vee)$, where the first map is given by taking duals and the second map is induced by the pivotal structure. This means that the composition $F \cong F^{\vee\vee} \xrightarrow{\psi^\vee} F^\vee$ is required to be $\psi$ and we find the condition in the statement.
	\end{itemize}
\end{proof}

\subsection{Frobenius structures}
The characterization of self-duality on compact projective objects leads to the following description of compact projective cyclic associative algebras in terms of symmetric Frobenius algebras.

\begin{proposition} \label{propcycas}
	A compact projective cyclic associative algebra $F$ in $\cat$ is a symmetric Frobenius algebra in $\cat$, i.e.\ $F$ is 
	\begin{itemize}
		\item a unital associative algebra with the unit $\eta:I \to F$ and the multiplication $\mu:F \otimes F \to F$,
		\item equipped with a non-degenerate symmetric pairing $\beta: F\otimes F \to I$ in the sense of Proposition \ref{ndsymp} that is invariant, i.e.\ $\beta(\eta,\mu)=\mu$.
	\end{itemize}
\end{proposition}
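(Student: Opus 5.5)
The plan is to unpack the definition of a cyclic associative algebra inside the cyclic associative algebra $\cat$ in the sense of the cyclic microcosm principle \cite[Section~4]{Woi}, and then translate each piece of data into morphisms of $\cat$. This is the same strategy as in the proof of Proposition~\ref{ndsymp}: every datum is a morphism into $\kappa$ or $\Delta$, and these can be rewritten through the identifications $\kappa(P,Q)\cong\Hom(I,P\otimes Q)$ and \eqref{dualiso}, which hold because $F$ is compact projective. The same paper proves the finite case \cite[Section~7]{Woi}, and we follow its proof. Only the places where finiteness was used need replacing, and there compact projectivity of $F$ (together with Lemma~\ref{lemma}) does the job.

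First, the underlying self-dual object. By Proposition~\ref{ndsymp}, the self-duality datum $\gamma\colon F\boxtimes F\to\Delta$ is the same as a non-degenerate symmetric pairing $\beta\colon F\otimes F\to I$.

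Next, the operations. A cyclic associative algebra in $\cat$ consists of a unit and a multiplication, but in the cyclic setting it is convenient to encode them as invariant vectors in the values of the cyclic associative algebra on the corolla of arity $n+1$. These values are $\kappa(F,F^{\otimes n})\cong\Hom(I,F\otimes F^{\otimes n})$. Using $\psi\colon F\cong F^\vee$ and \eqref{dualiso}, such vectors become morphisms:
\begin{itemize}
\item for $n=0$, a vector in $\kappa(F,I)\cong\Hom(I,F)$, i.e.\ a unit $\eta\colon I\to F$;
\item for $n=2$, a vector in $\kappa(F,F\otimes F)\cong\Hom(F^\vee,F\otimes F)\cong\Hom(F\otimes F,F)$, i.e.\ a multiplication $\mu$. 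Here we pass through $\Hom(F\otimes F\otimes F,I)$ via $\beta$.
\end{itemize}

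Then the axioms translate as follows.
\begin{itemize}
\item Compatibility with operadic composition, which in this setting is contraction with $\Delta$, gives associativity and unitality of $\mu$ and $\eta$. The contractions reduce to Yoneda coend computations $\int^{Q}\Hom(Q,-)\otimes Q$, exactly as in the proof of Proposition~\ref{prop}.
\item Equivariance under the cyclic permutation $\mathbb{Z}_3$ on the arity-three corolla says that the trilinear form $\beta\circ(\mu\otimes\mathrm{id})\colon F^{\otimes 3}\to I$ is cyclically invariant. Combined with symmetry of $\beta$, this gives invariance $\beta(\mu\otimes\mathrm{id})=\beta(\mathrm{id}\otimes\mu)$. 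This is the intended meaning of the displayed invariance condition in the statement.
\item The unit and the arity-one corolla identify the pairing with $\varepsilon\circ\mu$ for $\varepsilon=\beta(\eta\otimes\mathrm{id})$. Non-degeneracy of $\beta$ then makes $F$ a Frobenius algebra, and symmetry of $\beta$ makes it symmetric.
\end{itemize}

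Conversely, starting from a symmetric Frobenius algebra, we run the same identifications backwards to obtain the cyclic structure.

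The main obstacle will be the bookkeeping of the cyclic action. We must check that the $\mathbb{Z}_{n+1}$-action on $\kappa(F,F^{\otimes n})$, coming from the cyclic structure of $\cat$ (built from the braiding, balancing and $\gamma$ via \cite[Theorem~5.13]{MW1}), corresponds under $\Hom(I,F^{\otimes n+1})\cong\Hom(F^{\otimes n+1},I)$ to rotation of tensor factors through the pivotal structure.

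To handle this, we first reduce to compact projective arguments. All objects involved are compact projective: $F$ by assumption, $F^\vee$ by Lemma~\ref{lemma}, and tensor products of these by \eqref{dualiso}. So the cocontinuous extensions in the definitions of $\kappa$ and $\varepsilon$ are honest $\Hom$-spaces. The remaining check is then the same computation as in the rigid finite case, essentially the proof of the condition (PB) in Proposition~\ref{prop} applied factorwise.
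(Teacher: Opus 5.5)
Your proposal is correct and follows essentially the paper's route. The paper also gets the self-duality from Proposition~\ref{ndsymp}, takes $\eta$ and $\mu$ from the two generating operations, and derives associativity and unitality from operadic composition. It obtains invariance from the relation identifying the pairing with $\kappa(I,-\otimes-)$ (relation (Z) of \cite[Definition 4.1]{MW1}), which is exactly your third bullet $\beta=\varepsilon\circ\mu$.

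That relation, together with associativity and unitality, already gives $\beta(\mu\otimes\mathrm{id})=\beta(\mathrm{id}\otimes\mu)$. So your $\mathbb{Z}_3$-equivariance argument is redundant, and so is the matching of the cyclic action with pivotal rotation that you single out as the main obstacle; both can be dropped.
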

\begin{proof}
	The non-degenerate symmetric pairing $\beta$ endows $F$ with a self-dual structure by Proposition \ref{ndsymp}. By \cite[Remark 5.4]{Woi}, the cyclic associative algebra structure on $F$ is determined by the generating operations and relations of the cyclic associative operad. Namely, each generating operation of the associative operad induces a structure map on $F$, while the relations tell us the conditions that need to be satisfied by the structure maps. By following the presentatation of the cyclic associative operad given in \cite[Definition 4.1]{MW1}, we see that the two generating operations of the associative operad, namely the product and the unit, define two morphisms $\mu:F \otimes F \to F$ and $\eta:I \to F$. The associativity and unit isomorphisms of $\otimes$ entails that $\mu$ and $\eta$ endow $F$ with a unital associative algebra in $\cat$. Lastly, the relation (Z) in \cite[Definition 4.1]{MW1}, i.e.\ the isomorphism $\kappa(I,-\otimes-) \cong \kappa(-,-)$, implies that $\beta$ is invariant.
\end{proof}

With this result, we obtain the following examples of open correlators:

\begin{corollary} \label{corr}
	A compact projective symmetric Frobenius algebra $F \in \cat$ defines a consistent system of open correlators $\xi_\Sigma^F \in \cat_{\text{\normalfont \bfseries !}}(\Sigma;F,\dots,F)$. Conversely, any such system with underlying compact projective object $F$ defines a symmetric Frobenius algebra structure on $F$.
\end{corollary}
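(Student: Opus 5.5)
The plan is to combine Proposition~\ref{propcycas} with the cyclic and modular microcosm principle of \cite[Section~8]{Woi}. Recall from \cite{Woi} that a system of open correlators for $\cat_{\text{\normalfont \bfseries !}}$ is, by definition, a modular $\mathsf{O}$-algebra inside the modular $\mathsf{O}$-algebra $\cat_{\text{\normalfont \bfseries !}}$. Its underlying self-dual object $F\in\cat$ carries a structure, and the vectors $\xi_\Sigma^F$ are the images of the operations $\Sigma$. The key input is the microcosm analogue of the equivalence between cyclic associative algebras and open modular functors from \cite{MW6}, which \cite{Woi} establishes. Inside a modular $\mathsf{O}$-algebra obtained as $\cat_{\text{\normalfont \bfseries !}}$ from a cyclic associative algebra $\cat$, restriction to genus zero disks gives an equivalence between modular $\mathsf{O}$-algebras inside $\cat_{\text{\normalfont \bfseries !}}$ and cyclic associative algebras inside $\cat$. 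The reason is that $\mathsf{O}$ is the modular envelope of the cyclic associative operad. Since that statement is formulated in an arbitrary symmetric monoidal bicategory, I would invoke it in $\mathsf{Pr}$ with the self-duality $\kappa,\Delta$ from Proposition~\ref{prop}.

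For the forward direction, start with a compact projective symmetric Frobenius algebra $(F,\mu,\eta,\beta)$. By Proposition~\ref{ndsymp}, the pairing $\beta$ is a self-dual structure on $F$. By Proposition~\ref{propcycas}, read in the converse direction, the data $\mu,\eta$ together with the invariance of $\beta$ give a cyclic associative algebra inside $\cat$. The content to check here is that the argument of Proposition~\ref{propcycas} is really an equivalence. Its generators and relations argument via \cite[Definition~4.1]{MW1} identifies the data exactly, so associativity, unitality and invariance are precisely the relations. The microcosm extension then produces a modular $\mathsf{O}$-algebra inside $\cat_{\text{\normalfont \bfseries !}}$, and hence vectors $\xi^F_\Sigma\in\cat_{\text{\normalfont \bfseries !}}(\Sigma;F,\dots,F)$.

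The properties of these vectors follow from the structure of a modular algebra inside $\cat_{\text{\normalfont \bfseries !}}$. Mapping class group invariance holds because the algebra structure is a map of groupoid-valued operations, so automorphisms of $\Sigma$ act trivially on $\xi_\Sigma^F$. Compatibility with gluing holds because the structure is compatible with operadic and self-compositions. The self-compositions are computed through the copairing $\Delta$ and the self-duality $F\boxtimes F\to\Delta$. Via Corollary~\ref{cor}, one could also picture $\xi^F_\Sigma$ concretely as an admissible skein. The construction would triangulate $\Sigma$ into disks, label each disk by the map $I\to F^{\otimes m}$ built from $\eta,\mu$ and $\beta^{-1}$, and glue. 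I would not need this picture for the proof itself.

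For the converse, a system of open correlators with compact projective $F$ restricts to disks. This gives a cyclic associative algebra inside $\cat$ with underlying self-dual object $F$. Proposition~\ref{propcycas} then yields the symmetric Frobenius structure.

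I expect the main obstacle to be the non-finite setting. One has to verify that the microcosm extension theorem of \cite{Woi} applies in $\mathsf{Pr}$, where $\Delta$ is only a coend. The point to check is that the self-duality data on $F$ is well defined in this setting. Compact projectivity of $F$ is exactly what makes this work: it lets $\mathsf{Hom}_{\cat\boxtimes\cat}(F\boxtimes F,\Delta)$ be computed as in Proposition~\ref{ndsymp}. Beyond that, the remaining verifications are formal.
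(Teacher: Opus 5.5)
Your proposal is correct and takes essentially the same route as the paper. The paper cites the equivalence from \cite[Theorem~6.1]{Woi} between the groupoid of cyclic associative algebras inside $\cat$ and the groupoid of open correlators for $\cat_{\text{\normalfont \bfseries !}}$, and then concludes with Proposition~\ref{propcycas}; your remark that Proposition~\ref{propcycas} must be read as an equivalence for the forward direction makes explicit a point the paper uses only implicitly.
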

\begin{proof}
	This follows from a microcosmic version of the equivalence between cyclic associative algebras and open modular functors. More precisely, the collection of cyclic associative algebras in $\cat$ forms a groupoid \cite[Remark 5.3]{Woi}. Likewise, the collection of open correlators for $\cat_{\text{\normalfont \bfseries !}}$ forms a groupoid and these two groupoids are equivalent  \cite[Theorem 6.1]{Woi}. Now the result follows from Proposition \ref{propcycas}.
\end{proof}

\begin{exxample}\label{excor}
	In Example~\ref{examplevoa}, any projective object $P\in \mathscr{F}$ in the module category for the $\beta\gamma$ ghost produces a compact projective symmetric Frobenius algebra $P\otimes P^\vee$ in the ind completion $\mathbf{F}$ of $\mathscr{F}$. This gives rise to a system of 
	open correlators via Corollary~\ref{corr} with a holographic description. The structure maps of the symmetric Frobenius algebra $P\otimes P^\vee$ is given by suitable compositions of evaluation and coevaluation maps.
	\end{exxample}

\begin{exxample}[Towards a non-finite version of the Cardy case]
	Corollary~\ref{corr} can also be applied to some non-finite module categories
	of vertex operator algebras that are in particular semisimple, see again the list in the introduction of \cite{Creutzig:2024qvb} for examples. In that situation, the monoidal unit endowed with the standard symmetric Frobenius algebra structure provides at least an open correlator. Choosing the monoidal unit as a boundary condition is referred to as the \emph{Cardy case}, see e.g.~\cite{FGSS} for more background.
	\end{exxample}

\noindent \textsc{Université Bourgogne Europe, CNRS, IMB UMR 5584, F-21000 Dijon, France}

\end{document}

%% file: styles.tikzstyles
\tikzstyle{thickblue}=[-,thick, draw=blue]
\tikzstyle{black}=[-, draw=black]
\tikzstyle{blue}=[-, draw=blue]
\tikzstyle{dashed}=[-,dotted,draw=black]
\tikzstyle{dashedred}=[-,dotted,dashed,draw=red]
\tikzstyle{smallblue}=[fill=blue, draw=blue, shape=circle, minimum size=4pt, inner sep=0pt]
\tikzstyle{bluecircle}=[fill=white, draw=blue, shape=circle]
\tikzstyle{endarrow}=[->]

%% file: non_finite_open_modular_functors.bbl
\medskip
\small
\begin{thebibliography}{MSWY23}
	
	\bibitem[AF15]{AF}
	D.~Ayala and J.~Francis.
	\newblock Factorization homology of topological manifolds.
	\newblock {\em J. Top.}, 8(4):1045--1084, 2015.
	
	\bibitem[AW22]{AW}
	R.~Allen and S.~Wood.
	\newblock Bosonic ghostbusting: The bosonic ghost vertex algebra admits a
	logarithmic module category with rigid fusion.
	\newblock {\em Comm. Math. Phys.}, 390:959--1015, 2022.
	
	\bibitem[BCJF14]{Brandenburg2014ReflexivityAD}
	M.~Brandenburg, A.~Chirvasitu, and T.~Johnson-Freyd.
	\newblock Reflexivity and dualizability in categorified linear algebra.
	\newblock {\em Theory Appl. Cat.}, 2014.
	
	\bibitem[BD98]{BaDo}
	J.~Baez and J.~Dolan.
	\newblock Higher-dimensional algebra iii. $n$-categories and the algebra of
	opetopes.
	\newblock {\em Adv. Math.}, 135(2):145--206, 1998.
	
	\bibitem[BD04]{BeDr}
	A.~Beilinson and V.~Drinfeld.
	\newblock {\em Chiral Algebras}, volume~51 of {\em Colloquium Publications}.
	\newblock Amer. Math. Soc., 2004.
	
	\bibitem[BH24]{BH}
	J.~Brown and B.~Ha\"ioun.
	\newblock Skein categories in non-semisimple settings.
	\newblock To appear in \emph{Selecta Math.} arXiv:2406.08956 [math.QA], 2024.
	
	\bibitem[BJ17]{BJ}
	A.~Brochier and D.~Jordan.
	\newblock {Fourier transform for quantum $D$-modules via the punctured torus
		mapping class group}.
	\newblock {\em Quantum Top.}, 8:361--379, 2017.
	
	\bibitem[BJS21]{BJS}
	A.~Brochier, D.~Jordan, and N.~Snyder.
	\newblock On dualizability of braided tensor categories.
	\newblock {\em Compositio Math.}, 157(3):435–483, 2021.
	
	\bibitem[BJSS21]{BJSS}
	A.~Brochier, D.~Jordan, P.~Safronov, and N.~Snyder.
	\newblock Invertible braided tensor categories.
	\newblock {\em Alg. Geom. Top.}, 21(4):2107--2140, 2021.
	
	\bibitem[BK01]{BK}
	B.~Bakalov and A.~Kirillov.
	\newblock {\em Lectures on tensor categories and modular functors}, volume~21
	of {\em University Lecture Series}.
	\newblock Amer. Math. Soc., 2001.
	
	\bibitem[BSZ25]{BSZ}
	S.~Barkan, J.~Steinebrunner, and A.~Y. Zhang.
	\newblock Open 2{D} {TFT}s admit initial open-closed extensions.
	\newblock arXiv:2509.02553 [math.AT], 2025.
	
	\bibitem[Bud08]{Bud}
	R.~Budney.
	\newblock The operad of framed discs is cyclic.
	\newblock {\em J. Pure Appl. Alg.}, 212(1):193--196, 2008.
	
	\bibitem[BW22]{BW}
	A.~Brochier and L.~Woike.
	\newblock A classification of modular functors via factorization homology.
	\newblock Accepted for publication in \emph{Geom. Top.} arXiv:2212.11259
	[math.QA], 2022.
	
	\bibitem[BZBJ18]{BZBJ}
	D.~Ben-Zvi, A.~Brochier, and D.~Jordan.
	\newblock {Integrating quantum groups over surfaces}.
	\newblock {\em J. Top.}, 11(4):874--917, 2018.
	
	\bibitem[CGPM23]{CGP}
	F.~Costantino, N.~Geer, and B.~Patureau-Mirand.
	\newblock Admissible skein modules.
	\newblock arXiv:2302.04493 [math.GT], 2023.
	
	\bibitem[CMSY24]{Creutzig:2024qvb}
	T.~Creutzig, R.~McRae, K.~Shimizu, and H.~Yadav.
	\newblock Commutative algebras in {G}rothendieck-{V}erdier categories,
	rigidity, and vertex operator algebras.
	\newblock arXiv:2409.14618 [math.QA], 2024.
	
	\bibitem[Coo23]{Cooke}
	J.~Cooke.
	\newblock {Excision of skein categories and factorisation homology}.
	\newblock {\em Adv. Math}, 414:108848, 2023.
	
	\bibitem[Cos04]{Cos}
	K.~Costello.
	\newblock The {A}-infinity operad and the moduli space of curves.
	\newblock arXiv:math/0402015 [math.AG], 2004.
	
	\bibitem[Cos07]{Costello:2004ei}
	K.~Costello.
	\newblock {Topological conformal field theories and {C}alabi-{Y}au categories}.
	\newblock {\em Adv. Math.}, 210:165--214, 2007.
	
	\bibitem[EGNO15]{EGNO}
	P.~Etingof, S.~Gelaki, D.~Nikshych, and V.~Ostrik.
	\newblock {\em Tensor categories}, volume 205 of {\em Math. Surveys and
		Monogr.}
	\newblock Amer. Math. Soc., 2015.
	
	\bibitem[FGSS18]{FGSS}
	J.~Fuchs, T.~Gannon, G.~Schaumann, and C.~Schweigert.
	\newblock {The logarithmic {C}ardy case: Boundary states and annuli}.
	\newblock {\em Nucl. Phys. B}, 930:287--327, 2018.
	
	\bibitem[FSWY25]{algcften}
	J.~Fuchs, C.~Schweigert, S.~Wood, and Y.~Yang.
	\newblock Algebraic structures in two-dimensional conformal field theory.
	\newblock In R.~Szabo and M.~Bojowald, editors, {\em Encyclopedia of
		Mathematical Physics (Second Edition)}, pages 604--617. Academic Press, 2025.
	
	\bibitem[Gia11]{Gia}
	J.~Giansiracusa.
	\newblock The framed little 2-discs operad and diffeomorphisms of handlebodies.
	\newblock {\em J. Top.}, 4(4):919--941, 2011.
	
	\bibitem[GK95]{GK1}
	E.~Getzler and M.~Kapranov.
	\newblock Cyclic operads and cyclic homology.
	\newblock In {\em Geometry, Topology, and Physics}, pages 167--201.
	International Press, 1995.
	
	\bibitem[GK98]{GK2}
	E.~Getzler and M.~Kapranov.
	\newblock Modular operads.
	\newblock {\em Compositio Math.}, 110:65--125, 1998.
	
	\bibitem[Laz01]{LAZAROIU2001497}
	C.I. Lazaroiu.
	\newblock On the structure of open–closed topological field theory in two
	dimensions.
	\newblock {\em Nucl. Phys. B}, 603(3):497--530, 2001.
	
	\bibitem[Lur]{Lur}
	J.~Lurie.
	\newblock Higher algebra.
	\newblock https://people.math.harvard.edu/~lurie/papers/HA.pdf.
	
	\bibitem[MS89]{MS}
	G.~Moore and N.~Seiberg.
	\newblock Classical and quantum conformal field theory.
	\newblock {\em In IX International Conference on Mathematical Physics (IAMP)},
	1989.
	
	\bibitem[MS06]{Moore:2006dw}
	G.~Moore and G.~Segal.
	\newblock {D-branes and {K}-theory in 2{D} topological field theory}.
	\newblock arXiv:hep-th/0609042, 2006.
	
	\bibitem[MSWY23]{MSWY}
	L.~Müller, C.~Schweigert, L.~Woike, and Y.~Yang.
	\newblock The {L}yubashenko modular functor for {D}rinfeld centers via
	non-semisimple string-nets.
	\newblock Accepted for publication in \emph{Adv. Math.} arXiv:2312.14010
	[math.QA], 2023.
	
	\bibitem[MW23]{MW1}
	L.~Müller and L.~Woike.
	\newblock Cyclic framed little disks algebras, {G}rothendieck–{V}erdier
	duality and handlebody group representations.
	\newblock {\em Quart. J. Math.}, 74(1):163--245, 2023.
	
	\bibitem[MW24a]{MW3}
	L.~Müller and L.~Woike.
	\newblock Admissible skein modules and ansular functors: A comparison.
	\newblock Accepted for publication in \emph{Ann. Fac. Sci. Tou.}
	arXiv:2409.17047 [math.QA], 2024.
	
	\bibitem[MW24b]{MW2}
	L.~Müller and L.~Woike.
	\newblock Classification of consistent systems of handlebody group
	representations.
	\newblock {\em Int. Math. Res. Not.}, 2024(6):4767--4803, 2024.
	
	\bibitem[MW25]{MW6}
	L.~M{\"u}ller and L.~Woike.
	\newblock {Categorified Open Topological Field Theories}.
	\newblock {\em Proc. Amer. Math. Soc.}, 153:2381--2396, 2025.
	
	\bibitem[RST24]{RST}
	I.~Runkel, C.~Schweigert, and Y.~H. Tham.
	\newblock Excision for spaces of admissible skeins.
	\newblock arXiv:2407.09302 [math.QA], 2024.
	
	\bibitem[Seg88]{Seg}
	G.~Segal.
	\newblock Two-dimensional conformal field theories and modular functors.
	\newblock {\em Comm. Math. Phys.}, 1988.
	
	\bibitem[SP09]{SP}
	C.~J. Schommer-Pries.
	\newblock {\em The classification of two-dimensional extended topological field
		theories}.
	\newblock PhD thesis, Berkeley, 2009.
	
	\bibitem[SW03]{SalWahl}
	P.~Salvatore and N.~Wahl.
	\newblock Framed discs operads and {B}atalin–{V}ilkovisky algebras.
	\newblock {\em Quart. J. Math.}, 54(2):213--231, 2003.
	
	\bibitem[Til98]{Til}
	U.~Tillmann.
	\newblock S-structures for k-linear categories and the definition of a modular
	functor.
	\newblock {\em J. London Math. Soc.}, 58(1):208--228, 08 1998.
	
	\bibitem[Tur94]{Tur}
	V.~G. Turaev.
	\newblock {\em Quantum Invariants of Knots and 3-Manifolds}, volume~18 of {\em
		Studies in Math.}
	\newblock De Gruyter, 1994.
	
	\bibitem[Wah01]{Wahl}
	N.~Wahl.
	\newblock {\em Ribbon braids and related operads}.
	\newblock PhD thesis, Oxford, 2001.
	
	\bibitem[Wal24]{Walton}
	C.~Walton.
	\newblock {\em Symmetries of Algebras}, volume~1.
	\newblock 619 Wreath Publishing, 2024.
	
	\bibitem[Woi25a]{Woi3}
	L.~Woike.
	\newblock The construction of correlators in finite rigid logarithmic conformal
	field theory.
	\newblock arXiv:2507.22841 [math.QA], 2025.
	
	\bibitem[Woi25b]{Woi}
	L.~Woike.
	\newblock The cyclic and modular microcosm principle in quantum topology.
	\newblock {\em Canad. J. Math.}, pages 1--41, 2025.
	
	\bibitem[Woi25c]{Woi2}
	L.~Woike.
	\newblock Reflection equivariance and the {H}eisenberg picture for spaces of
	conformal blocks.
	\newblock arXiv:2507.22820 [math.QA], 2025.
	
\end{thebibliography}
